\theoremstyle{plain}
\newtheorem{Observation}{Observation}
\newenvironment{ObservationBis}[1]{
	\addtocounter{Observation}{-1}%
	\begin{Observation}
}{
	\end{Observation}
}
\newtheorem{fact}{Fact}
\newtheorem*{maintheorem}{Main Theorem}
\newtheorem*{mainproblem}{Main Problem}
\newtheorem{problem}{Problem}
\numberwithin{equation}{section}
\renewcommand{\P}{\mathbb P}
\renewcommand{\S}{\mathbb S}
\DeclareMathOperator{\OG}{O}
\DeclareMathOperator{\SOG}{SO}
\newcommand{\isom}{\cong}
\newcommand{\Z}{\mathbb Z}
\newcommand{\R}{\mathbb R}
\begin{document}

\allowdisplaybreaks

\renewcommand{\thefootnote}{$\star$}

\newcommand{\arXivNumber}{1510.09028}

\renewcommand{\PaperNumber}{041}

\FirstPageHeading

\ShortArticleName{Are Orthogonal Separable Coordinates Really Classif\/ied?}

\ArticleName{Are Orthogonal Separable Coordinates\\ Really Classif\/ied?\footnote{This paper is a~contribution to the Special Issue
on Analytical Mechanics and Dif\/ferential Geometry in honour of Sergio Benenti.
The full collection is available at \href{http://www.emis.de/journals/SIGMA/Benenti.html}{http://www.emis.de/journals/SIGMA/Benenti.html}}}

\Author{Konrad SCH\"OBEL}
\AuthorNameForHeading{K.~Sch\"obel}
\Address{Mathematisches Institut, Fakult\"at f\"ur Mathematik und Informatik,\\
Friedrich-Schiller-Universit\"at Jena, 07737 Jena, Germany}
\Email{\href{mailto:konrad.schoebel@uni-jena.de}{konrad.schoebel@uni-jena.de}}

\ArticleDates{Received October 30, 2015, in f\/inal form March 15, 2016; Published online April 26, 2016}

\Abstract{We prove that the set of orthogonal separable coordinates on an arbitrary
(pseudo-)Riemannian manifold carries a natural structure of a
projective variety, equipped with an action of the isometry group. This
leads us to propose a new, algebraic geometric approach to the
classif\/ication of orthogonal separable coordinates by studying the
structure of this variety. We give an example where this approach reveals
unexpected structure in the well known classif\/ication and pose a number of
problems arising naturally in this context.}

\Keywords{separation of variables; St\"ackel systems; Deligne--Mumford moduli spaces; Stashef\/f polytopes;	operads}

\Classification{14H70; 53A60; 58D27}

\renewcommand{\thefootnote}{\arabic{footnote}}
\setcounter{footnote}{0}

\section{Introduction}

``Of course, they are!'' any reader familiar with the topic will immediately
reply to the question in the title, ``The problem has been solved exhaustively
almost 30 years ago''. Indeed, based on the pioneering works of Paul
St\"ackel~\cite{Staeckel} and Luther P.~Eisenhart~\cite{Eisenhart}, Ernest
G.~Kalnins and Willard Miller gave a comprehensive solution of the classical
problem to classify all orthogonal separable coordinate systems on a constant
curvature manifold \cite{Kalnins, Kalnins&Miller}. These are coordinates in
which fundamental equations like the Hamilton--Jacobi equation or the
Schr\"odinger equation can be solved by a separation of variables. With our
somewhat provocative title we do by no means at all want to cast any doubt on
this milestone in the theory of separation of variables, but rather question
what is meant by the word \emph{solved}. Our reader will again reply that
Kalnins and Miller's result gives a complete list of (families of equivalence
classes of) orthogonal separable coordinates. The issue we would like to
address here is that this list, mathematically speaking, is nothing but a mere
set, whereas we will show that the combination of some very elementary
observations leads to the conclusion that this set actually has a~rich
geometric structure.
Namely, we prove:

\begin{maintheorem}
	The set of orthogonal separable coordinates on an $n$-di\-men\-sion\-al
	\mbox{$($pseu\-do-$)$}Rie\-mannian manifold $M$ carries a natural structure of a
	projective variety, isomorphic to a subvariety $\mathcal S(M)\subseteq
	G_n\bigl(\mathcal K(M)\bigr)$ in the Grassmannian of $n$-planes in the
	space $\mathcal K(M)$ of second order Killing tensors on~$M$. Moreover,
	it comes equipped with a natural action of the isometry group.
\end{maintheorem}

We call $\mathcal S(M)$ the \emph{variety of orthogonal separable coordinates}
on~$M$.

Our theorem shows that the classif\/ication problem for orthogonal separable
coordinates is, essentially, an algebraic geometric problem and should
therefore be treated by algebraic geometric rather than dif\/ferential geometric
methods. To the best of our knowledge, this point has never been made in the
vast amount of literature on separation of variables. Separable coordinates
have always been studied via partial dif\/ferential equations, while our result
implies that it is governed by algebraic equations alone. This observation
could have been made over 60 years ago, since it is mainly a consequence of the
Nijenhuis integrability conditions, published in 1951~\cite{Nijenhuis}.

So, from a categorical point of view, we can say that the classif\/ication
problem for orthogonal separable coordinates \emph{is solved}, but in the
category of sets and \emph{not in its natural category}, which is the category
of projective varieties equipped with group actions. This motivates us to propose an algebraic geometric approach to the
classif\/ication of orthogonal separable coordinates, by investigating the
variety ${\mathcal S}(M)$ as in~\cite{Schoebel15} for constant curvature manifolds.

\begin{mainproblem}
	For a given $($pseudo-$)$Riemannian manifold~$M$, describe the structure of
	the variety $\mathcal S(M)$ of orthogonal separable coordinate systems and
	of its quotient $\mathcal S(M)/G$ under the isometry group~$G$.
\end{mainproblem}

What does it mean to describe the structure of this variety and why should
that give any relevant information for the classif\/ication of separation
coordinates? We will illustrate this using the example of the $n$-dimensional
sphere $\S^n$, for which the variety of orthogonal separable coordinates in
normal form has recently been identif\/ied to the Deligne--Mumford moduli space
$\bar{\mathscr M}_{0,n+2}(\R)$ of stable algebraic curves of genus zero with
$n+2$ marked points~\cite{Schoebel&Veselov}. Exploiting the properties of
these spaces reveals a rich and unexpected geometric, combinatorial and
algebraic structure behind the long known classif\/ication.

While some may have considered separation of variables as ``old-fashioned'' or
even ``outdated'', this example builds a bridge from the over 120 years old
theory of separation of variables to modern algebraic geometry and popular
topics like operad theory. Spheres are but the simplest example of constant
curvature manifolds in this context and even more interesting results are
expected for other manifolds and other notions of variable separation. That
is to say, we are far from a complete understanding of variable separation in
all its facets.

To keep this notice concise, we will refrain from giving a more elaborate
introduction to separation of variables, which can be found elsewhere in the
abundant literature on this subject. We concentrate instead on those few
def\/initions and results necessary to set up our algebraic geometric
description.

\section{Some simple observations}

\subsection{Killing tensors}

{\sloppy Our f\/irst two observations concern second order Killing tensors, the
fundamental object in the characterisation of separable coordinates. These
are symmetric tensor f\/ields $K_{\alpha\beta}$ on a~\mbox{(pseudo-)}Rie\-mannian
manifold which satisfy the \emph{Killing equation}
\begin{gather}
	\label{eq:Killing}
	\nabla_aK_{bc}+
	\nabla_bK_{ca}+
	\nabla_cK_{ab}=0.
\end{gather}
Here $\nabla$ denotes the covariant derivative with respect to the Levi-Civita
connection of the metric. Note that the metric itself is trivially a Killing
tensor, since it is covariantly constant.

}

\begin{Observation}
	The Killing equation~\eqref{eq:Killing}	is linear.
	Therefore Killing tensors form a vector space.
\end{Observation}

\begin{Observation}
	The Killing equation \eqref{eq:Killing} is overdetermined of finite type.
	Hence the space of Killing tensors is finite-dimensional.
\end{Observation}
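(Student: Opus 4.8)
The plan is to establish two things in turn: that equation~\eqref{eq:Killing} is of finite type, and that finite type forces the solution space to be finite-dimensional. I would begin by reading~\eqref{eq:Killing} as a first-order linear differential operator $D\colon S^2T^*M \to S^3T^*M$ sending $K_{ab}$ to the cyclic sum $\nabla_{(a}K_{bc)}$. Its principal symbol in a covector $\xi$ is the map $K_{ab}\mapsto \xi_{(a}K_{bc)}$, and the point is that this symbol has a large kernel: the first symbol space $g_1\subset T^*M\otimes S^2T^*M$ consists of tensors $v_{a,bc}$, symmetric in $bc$, whose total symmetrisation $v_{(a,bc)}$ vanishes. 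Equivalently, once the Killing equation holds, $\nabla_aK_{bc}$ is constrained to lie in the irreducible mixed-symmetry subbundle corresponding to the Young diagram with row lengths $(2,1)$. This is the first, purely algebraic, consequence of~\eqref{eq:Killing}.

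Next I would prolong. Differentiating~\eqref{eq:Killing} and repeatedly commuting covariant derivatives---so that every commutator is traded for curvature terms via the Ricci identity---produces expressions for the successive symmetrised derivatives $\nabla^{(k)}K$ in terms of lower-order jet data and the curvature of $M$. On the level of symbols this is the computation of the prolongations, where $g_{k+1}=(g_k\otimes T^*M)\cap(S^{k+1}T^*M\otimes S^2T^*M)$. The crucial claim is that these spaces vanish from some finite order on: once enough derivative indices are symmetrised together while the Killing symmetry forces a competing (anti)symmetrisation, the tableau becomes over-constrained and the only admissible tensor is zero. Concretely, an analogous over-symmetrisation mechanism to the one that kills $g_2$ in the Killing-vector case forces the rank-two prolongation to terminate after finitely many steps, so that $\nabla^{(N)}K$ becomes algebraically determined by $K,\nabla K,\dots,\nabla^{(N-1)}K$ and the curvature. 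This establishes finite type, and I expect this symbol/prolongation bookkeeping to be the main obstacle: one must track the mixed-symmetry components carefully and verify that the prolonged symbol really does terminate rather than merely shrink.

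Finally, granting finite type, the conclusion is standard. Assembling the relevant symmetrised jets into a single section $J=(K,\nabla K,\dots,\nabla^{(N-1)}K)$ of a finite-rank bundle, the closed prolonged system takes the form $\nabla_a J = A_a(J)$ for a curvature-dependent bundle endomorphism $A_a$, that is, $J$ is parallel for a suitably modified connection. A parallel section is uniquely determined by its value at a single point of (each connected component of) $M$, so the map $K\mapsto J$ embeds the space of solutions into the finite-dimensional fibre of this bundle. Hence the space of Killing tensors is finite-dimensional, as claimed; one even reads off an explicit upper bound for its dimension from the rank of the prolongation bundle, which is attained in the constant-curvature case.
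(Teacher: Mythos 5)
Your route --- recast \eqref{eq:Killing} as a first-order linear operator, compute its symbol, prolong, show the prolongation terminates, then conclude by the parallel-section argument for the induced connection on the prolongation bundle --- is exactly the route the paper relies on (your ``finite-rank bundle'' of jets is what the paper calls the tractor bundle $\mathcal TM$ in the proof of the Main Theorem). The problem is that the one step you leave unverified is the entire content of the Observation. Everything else in your write-up (the identification of the first symbol space with the $\yng(2,1)$-symmetry bundle, the reformulation $\nabla_aJ=A_a(J)$, the injectivity of $K\mapsto J(p)$ on a connected manifold, the dimension bound) is routine once termination is granted; ``the tableau becomes over-constrained'' is a hope, not an argument, and you say yourself that verifying it is the main obstacle. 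As it stands, the proposal assumes what is to be proved.

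The paper fills precisely this hole by citation rather than computation: Wolf's structural equations show that the prolongation closes already at second order --- every covariant derivative of a Killing tensor $K_{ab}$ is a linear combination, with coefficients polynomial in the Riemann curvature, of the three quantities $K_{ab}\in\Gamma(\,\yng(2)\,T^*\!M)$, $\nabla_aK_{bc}\in\Gamma(\,\yng(2,1)\,T^*\!M)$ and $Y(\nabla_a\nabla_cK_{bd})\in\Gamma(\,\yng(2,2)\,T^*\!M)$. In your symbol language this is the finite, explicit computation $g_1\isom\yng(2,1)$, $g_2\isom\yng(2,2)$ and $g_3=\bigl(g_2\otimes T^*\!M\bigr)\cap\bigl(S^3T^*\!M\otimes S^2T^*\!M\bigr)=0$. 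Two corrections are needed to make your sketch honest. First, the analogy with Killing vectors is off by one step: there the prolongation dies at the second symbol ($g_2=0$), whereas here $g_2$ is nonzero --- it carries the curvature-tensor symmetry --- and it is $g_3$ that vanishes, so the mechanism does not transfer ``analogously'' but must be re-run one order higher. Second, representation-theoretic bookkeeping alone cannot close the argument: by the Pieri rule both $g_2\otimes T^*\!M$ and $S^3T^*\!M\otimes S^2T^*\!M$ contain a copy of the irreducible of shape $(3,2)$, so the vanishing of their intersection is a statement about the particular embeddings inside $(T^*\!M)^{\otimes 5}$, not about which irreducibles occur. Either this explicit verification of $g_3=0$ (equivalently, Wolf's closed system) or a citation to it is required to complete your proof; with it, the rest of your argument goes through and reproduces the paper's statement, including the sharp dimension bound attained on constant curvature spaces.
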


\begin{definition}
	We denote the f\/inite-dimensional space of second order Killing tensors on
	a~(pseu\-do-)Riemannian manifold~$M$ by~$\mathcal K(M)$.
\end{definition}

\subsection{St\"ackel systems}

Here and in what follows we will silently identify symmetric forms with
endomorphisms using the metric. In particular, we can interpret a Killing
tensor as a f\/ield of symmetric endomorphisms. We say that two Killing tensors
commute if the corresponding endomorphisms commute algebraically at every
point. Note that the metric corresponds to the identity endomorphism and
hence commutes with any other Killing tensor.

A f\/ield of symmetric endomorphisms is said to have \emph{surface normal
eigenvectors}, if it has simple eigenvalues and the orthogonal complement of
each eigenvector f\/ield forms an integrable distribution\footnote{Some care has to be taken on a pseudo-Riemannian manifold, where the
	eigenvalues and eigenvectors may be complex. In this case either the
	separable coordinates are not everywhere def\/ined or one weakens the
	notion of variables separation to allow for complex variables.}.
By def\/inition such a f\/ield def\/ines a family of $n$ orthogonal hypersurface
foliations. Moreover, any symmetric endomorphism f\/ield with surface normal
eigenvectors and commuting with the f\/irst def\/ines the same foliations.

\begin{definition}
	A \emph{St\"ackel system} on an $n$-dimensional (pseudo-)Riemannian
	manifold is an $n$-dimensional vector space spanned by the metric and
	$n-1$ mutually commuting Killing tensors with surface normal
	eigenvectors.
\end{definition}

The leaves of a family of $n$ complementary hypersurface foliations are the
coordinate hypersurfaces of a coordinate system which is unique up to
permutations and reparametrisations of the individual coordinates. By abuse
of language, we will call an equivalence class of coordinate systems under
these transformations simply \emph{coordinates}. Moreover, if the
Hamilton--Jacobi equation
\begin{gather*}
	\tfrac12g^{ij}\frac{\partial W}{\partial x^i}\frac{\partial W}{\partial x^j}=E
\end{gather*}
admits a solution of the form
\begin{gather*}
	W\big(x^1,\ldots,x^n;\underline c\big)=W_1\big(x^1;\underline c\big)+\cdots+W_n\big(x^n;\underline c\big),
	\qquad
	\det\left(\frac{\partial^2W}{\partial x^i\partial c_j}\right)\not=0
\end{gather*}
in the coordinates $x^1,\ldots,x^n$, we call them \emph{separable
coordinates}. We can now state the classical result, which is the basis for
their classif\/ication.

\begin{theorem}[\cite{Eisenhart, Staeckel}]
	Locally, there is a bijective correspondence between St\"ackel systems and
	orthogonal separable coordinates for the Hamilton--Jacobi equation.
\end{theorem}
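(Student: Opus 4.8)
The plan is to establish the two directions of the correspondence separately, using as the central tool an intrinsic characterisation of diagonal Killing tensors in orthogonal coordinates. Working in orthogonal coordinates $x^1,\ldots,x^n$ with $g=\sum_i H_i^2\,(dx^i)^2$, a symmetric endomorphism field diagonal in the coordinate frame, with eigenvalues $\lambda_1,\ldots,\lambda_n$, satisfies the Killing equation~\eqref{eq:Killing} if and only if its eigenvalues obey the first-order system
\[
	\partial_j\lambda_i=(\lambda_j-\lambda_i)\,\partial_j\log H_i^2,\qquad i\neq j,
\]
obtained by writing out~\eqref{eq:Killing} in the diagonal frame. I would first record this lemma, since it converts the tensorial Killing condition into algebraic constraints on eigenvalue functions and is the bridge between the differential-geometric and the separable pictures.

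For the direction from St\"ackel systems to coordinates, I would start from an $n$-dimensional space spanned by $g$ and $n-1$ commuting Killing tensors with surface normal eigenvectors. Since the corresponding endomorphisms commute and possess simple eigenvalues, they are simultaneously diagonalisable and share a common orthonormal eigenframe $E_1,\ldots,E_n$; the surface normal hypothesis makes each distribution $E_i^\perp$ integrable, so Frobenius yields orthogonal coordinates $x^1,\ldots,x^n$ in which every member of the system is diagonal. Collecting the eigenvalues $\lambda_i^{(r)}$ of a basis $K_1=g,K_2,\ldots,K_n$ into a matrix and feeding in the eigenvalue equations above, one shows that this matrix can be normalised to a St\"ackel matrix $S=(S_{ij})$ with $\partial_k S_{ij}=0$ for $k\neq i$, whence the $g^{ii}$ form the first row of $S^{-1}$ and the Hamilton--Jacobi equation separates in these coordinates.

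Conversely, orthogonal separable coordinates are precisely those in which the metric arises from such a St\"ackel matrix; defining tensors $K_r$ by $K_r^{ii}=(S^{-1})_{ri}$ produces $n$ symmetric fields, diagonal in the coordinate frame, with $K_1=g$. Because each $S_{ij}$ depends only on $x^i$, the eigenvalue equations of the lemma hold, so the $K_r$ are Killing; they commute as they are simultaneously diagonal, and their eigenvectors are the integrable coordinate directions, hence surface normal. Thus $\langle K_1,\ldots,K_n\rangle$ is a St\"ackel system, and the two assignments are mutually inverse.

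The main obstacle is the normalisation step in the forward direction: showing that the matrix of eigenvalues of an abstract St\"ackel system can be brought to a genuine St\"ackel matrix whose $i$-th row depends only on~$x^i$. The eigenvalue equations constrain the $\partial_j\lambda_i$ only through the quantities $\partial_j\log H_i^2$, and extracting from them the separated-variable dependence requires integrating the overdetermined system while using both linear independence of the $n$ Killing tensors and simplicity of the eigenvalues to guarantee invertibility of $S$. This integrability argument, exactly where the finite-type nature of the Killing equation noted above is felt, is the technical heart of the proof and the only point where more than bookkeeping is needed.
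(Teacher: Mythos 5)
First, a point of reference: the paper does not prove this theorem at all --- it is quoted as the classical result of St\"ackel and Eisenhart with a citation, so your proposal can only be compared with the classical argument. Your overall architecture (diagonalise the system in its common eigenframe, pass to the eigenvalue form of the Killing equation, construct a St\"ackel matrix and invert it) is indeed that classical route. However, your central lemma is stated incorrectly, and this is not cosmetic. For an endomorphism field diagonal in orthogonal coordinates, $K\indices{^i_j}=\lambda_i\delta^i_j$ with $g=\sum_i H_i^2\,(dx^i)^2$, writing out the Killing equation~\eqref{eq:Killing} gives
\begin{gather*}
	\partial_i\lambda_i=0
	\qquad\text{and}\qquad
	\partial_j\lambda_i=(\lambda_i-\lambda_j)\,\partial_j\log H_i^2,
	\quad i\neq j,
\end{gather*}
i.e., the opposite sign from yours \emph{and} an additional family of equations, coming from the components $\nabla_{(i}K_{ii)}=0$, which you have dropped. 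The sign can be checked in polar coordinates on the plane: for $K=\partial_\theta\otimes\partial_\theta$ one has $\lambda_r=0$, $\lambda_\theta=r^2$, $H_\theta=r$, and $\partial_r\lambda_\theta=2r=(\lambda_\theta-\lambda_r)\,\partial_r\log H_\theta^2$, not its negative. The missing diagonal equations make the ``if'' direction of your lemma false: in Cartesian coordinates on flat space ($H_i\equiv1$) your system only forces $\partial_j\lambda_i=0$ for $i\neq j$, so $\lambda_1=x^1$, $\lambda_2=0$ satisfies it, yet the corresponding tensor $K=x^1\,\partial_1\otimes\partial_1$ is not Killing. Since your converse direction rests entirely on the inference ``the eigenvalue equations of the lemma hold, so the $K_r$ are Killing'', this defect propagates directly into the main argument.

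Second, you yourself identify the normalisation step --- showing that the eigenvalue matrix of an abstract St\"ackel system can be integrated to a genuine St\"ackel matrix $S$ with $\partial_kS_{ij}=0$ for $k\neq i$ --- as ``the technical heart of the proof'', and you do not carry it out. That step is precisely Eisenhart's contribution: one forms the matrix $B$ with rows $B_{ri}=K_r{}^{ii}$ (contravariant diagonal components, $K_1=g$), proves invertibility from linear independence, and shows that $S:=B^{-1}$ has separated rows by differentiating $SB=\mathrm{id}$ and substituting the (corrected) eigenvalue equations, including the diagonal ones $\partial_i\lambda_i=0$, which are indispensable here. A proposal that defers this step has omitted the substance of the theorem rather than proved it. So: right strategy, but the lemma must be repaired (correct sign, plus the diagonal equations), and the forward direction's key computation still has to be written down.
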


Although the above correspondence is only local, we can use it to classify
orthogonal separable coordinates which are global in the sense that they are
induced by a global St\"ackel system. In particular, it gives a global
classif\/ication if every local Killing tensor extends to a global one, as is
the case with simply connected constant curvature manifolds.

We should remark that an (additive) separation of the Hamilton--Jacobi equation
is a necessary condition for a (multiplicative) separation of the Schr\"odinger
and other prominent equations and that suf\/f\/icient conditions can be given
\cite{Eisenhart, Robertson}. This is why the classical result is formulated
for the Hamilton--Jacobi equation. We conclude with an algebraic geometric
view on St\"ackel systems.

\begin{Observation}
	By definition, a~St\"ackel system defines a point in the Grassmannian
	$G_n(\mathcal K(M))$ of $n$-planes in the space of Killing
	tensors.
\end{Observation}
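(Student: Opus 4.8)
The plan is simply to unwind the definitions, since the statement is essentially tautological. First I would recall that, by Observations~1 and~2, the second order Killing tensors on $M$ assemble into a finite-dimensional vector space $\mathcal{K}(M)$. Next I would observe that a St\"ackel system is, by its very definition, the linear span of the metric together with $n-1$ mutually commuting Killing tensors with surface normal eigenvectors, and that this span is required to be $n$-dimensional. Thus a St\"ackel system is nothing but an $n$-dimensional linear subspace of the ambient finite-dimensional vector space $\mathcal{K}(M)$.

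To conclude, I would invoke the defining property of the Grassmannian: $G_n(\mathcal{K}(M))$ is precisely the set whose points are the $n$-dimensional linear subspaces of $\mathcal{K}(M)$. Assigning to each St\"ackel system its underlying subspace therefore lands it at a well-defined point of $G_n(\mathcal{K}(M))$. The only facts that use the hypotheses are that the span is genuinely linear (immediate, since any span is a subspace) and that its dimension is exactly $n$ (built into the definition, which guarantees the linear independence of the $n$ spanning tensors).

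I do not expect any real obstacle here; the content of the observation is purely the recognition that the differential-geometric data of a St\"ackel system repackages, with no extra work, as the point datum of a Grassmannian. The genuine difficulty is deferred to the Main Theorem, where one must show that the image of this assignment --- the set of all such points --- is cut out by algebraic equations and hence forms a projective subvariety $\mathcal{S}(M) \subseteq G_n(\mathcal{K}(M))$; that is where the commutation relations and the Nijenhuis integrability conditions will do the substantive work.
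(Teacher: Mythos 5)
Your proposal is correct and matches the paper exactly: the paper offers no argument beyond the phrase ``by definition,'' treating the observation as the tautological repackaging you describe --- a St\"ackel system is by definition an $n$-dimensional subspace of the finite-dimensional vector space $\mathcal K(M)$, hence a point of $G_n\bigl(\mathcal K(M)\bigr)$. Your closing remark, that the substantive algebraic-geometric work (commutation, Nijenhuis integrability) is deferred to the Main Theorem, also reflects the paper's structure faithfully.
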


\subsection{Nijenhuis integrability}

The above geometric def\/inition of surface normal eigenvectors is not suitable
for our purposes. We will instead use an analytic characterisation due to
Nijenhuis \cite{Nijenhuis}, who showed that a diagonalisable endomorphism
f\/ield $K$ has surface normal eigenvectors if and only if it satisf\/ies the
following \emph{Nijenhuis integrability conditions}:
\begin{subequations}
	\label{eq:Nijenhuis}
	\begin{gather}
		K\indices{_{d[a}}\nabla_bK\indices{_{c]}^d} =0,\\
		K\indices{_d^e}K\indices{_{e[a}}\nabla_bK\indices{_{c]}^d}+K\indices{_{d[a}}K\indices{^e_b}\nabla^dK\indices{_{c]e}} =0,\\
K\indices{_d^e}K\indices{_e^f}K\indices{_{f[a}}\nabla_bK\indices{_{c]}^d}+K\indices{_e^f}K\indices{_{d[a}}K\indices{^e_b}\nabla^dK\indices{_{c]f}} =0.
	\end{gather}
\end{subequations}

\begin{definition}
	We call a Killing tensor satisfying the Nijenhuis integrability conditions
	\eqref{eq:Nijenhuis} \emph{Nijenhuis integrable} and denote the set of
	Nijenhuis integrable Killing tensors on a~\mbox{(pseudo-)}Rie\-mannian manifold
	$M$ by $\mathcal I(M)$.
\end{definition}

The next observation is due to Sergio Benenti~\cite{Benenti}.

\begin{Observation}
	\label{obs:Benenti}
	Every Killing tensor with surface normal eigenvectors is contained in a~unique St\"ackel system.
\end{Observation}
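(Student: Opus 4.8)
The plan is to reduce the statement to the Eisenhart--St\"ackel correspondence (the Theorem above) together with elementary linear algebra for a symmetric endomorphism field with simple spectrum. Since that correspondence is local, I phrase everything locally; this is harmless, because a Killing tensor solves an overdetermined system of finite type (Observation above) and is therefore determined by its germ at any point, so pointwise spectral data control an entire St\"ackel system. The first step is to extract the coordinates attached to $K$: having surface normal eigenvectors means, by definition, that $K$ has simple eigenvalues and that each eigendistribution is hypersurface-orthogonal, so the $n$ eigenvector fields are the normals of $n$ mutually orthogonal foliations. These foliations are the coordinate hypersurfaces of an orthogonal coordinate system $x^1,\dots,x^n$, unique up to the permitted relabellings and reparametrisations, in which $K$ is diagonal. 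The essential input is that these coordinates are in fact \emph{separable}, which is Eisenhart's analysis of the Killing equation for a diagonal tensor in orthogonal coordinates: surface normal eigenvectors force the metric into St\"ackel form.

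For existence I would invoke the Theorem: the separable coordinates $x^1,\dots,x^n$ correspond to a St\"ackel system $S$. Unwinding the correspondence, $S$ is spanned by the metric together with the diagonal Killing tensors in these coordinates, whose eigenvalues are read off from the cofactors of the associated St\"ackel matrix; since $K$ is by construction diagonal in exactly these coordinates, $K\in S$. The substantive point -- and the step I expect to be the main obstacle -- is precisely this identification: one must verify that \emph{every} diagonal Killing tensor in separable coordinates already lies in the St\"ackel system, i.e.\ that the space of such tensors is exactly $n$-dimensional and automatically satisfies the commuting and surface-normal conditions. This is the heart of St\"ackel's and Eisenhart's theory, and it is what makes the inclusion $K\in S$ nontrivial rather than a formality.

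For uniqueness, suppose $K$ lies in two St\"ackel systems $S_1$ and $S_2$. Every member of $S_i$ commutes with $K$, and because $K$ has simple eigenvalues, any symmetric endomorphism commuting with it is simultaneously diagonalised in the eigenframe of $K$. Hence all elements of $S_1$ and of $S_2$ are diagonal in that common eigenframe, so both systems induce the very same $n$ orthogonal foliations -- namely those of $K$ -- and therefore the same separable coordinates. By the injectivity half of the Eisenhart--St\"ackel bijection, distinct St\"ackel systems must yield distinct coordinates, so $S_1=S_2$.

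In summary, the geometric content (surface normal eigenvectors yield separable coordinates, and diagonal Killing tensors fill out the St\"ackel system) is inherited from the classical theory and supplies existence, while uniqueness follows from the soft observation that a simple spectrum pins down the common eigenframe and hence, via the bijection, the whole system. I therefore expect the only genuine work to be the verification that the diagonal Killing tensors in the coordinates defined by $K$ form precisely an $n$-dimensional St\"ackel system containing $K$; everything else is linear algebra and an appeal to the stated Theorem.
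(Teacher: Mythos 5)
The first thing to note is that the paper does not prove this Observation at all: it is stated as a known result, attributed to Benenti, and the citation~\cite{Benenti} is the entire ``proof''. So your proposal cannot be compared against an internal argument, only against the classical literature it reduces to~--- and on that score it is sound, and is essentially Benenti's own route: pass to the orthogonal coordinates determined by the eigenfoliations of $K$; invoke Eisenhart's result~\cite{Eisenhart} that the existence of such a characteristic Killing tensor puts the metric in St\"ackel form, so that these coordinates are separable; identify the St\"ackel system of these coordinates with the space of Killing tensors diagonal in them, which is exactly $n$-dimensional and hence contains $K$; and get uniqueness from the fact that a simple spectrum fixes the common eigenframe. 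You are also right, and commendably explicit, about where the substance lies: the two inputs you do not prove (that surface normal eigenvectors force St\"ackel form, and that the diagonal Killing tensors in separable coordinates form precisely an $n$-dimensional space) are \emph{not} consequences of the Theorem as stated in the paper~--- they require unwinding its proof~--- and they are exactly what \cite{Staeckel, Eisenhart, Benenti} establish, so your argument has the same logical status as the paper's citation, just with the dependence made precise. One simplification worth recording: your uniqueness step does not need injectivity of the Eisenhart--St\"ackel correspondence. Every element of $S_1$ and of $S_2$ commutes with $K$, hence is diagonal in the eigenframe of $K$; but for a tensor diagonal in fixed orthogonal coordinates the Killing equation determines \emph{all} first derivatives of its $n$ eigenvalue functions (the Killing--Eisenhart equations), so the space of such Killing tensors has dimension at most $n$, independently of any separability assumption. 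Since $S_1$ and $S_2$ are both $n$-dimensional subspaces of that space, $S_1=S_2$ by a dimension count alone.
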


Note that this observation would reduce the classif\/ication of separable
coordinates to solving the Nijenhuis conditions for a (single) Killing tensor,
since the determination of the corresponding St\"ackel system is only a linear
problem. Nevertheless, a direct solution of this system of non-linear partial
dif\/ferential equations has been considered intractable~\cite{Horwood&McLenaghan&Smirnov}. In contrast, our next two observations
show that these equations have a remarkable property.

\begin{Observation}
	The covariant derivative $K\mapsto\nabla K$ is a linear operation. Hence
	any polynomial expression in~$K$ and~$\nabla K$ is actually polynomial in
	$K$ alone.
\end{Observation}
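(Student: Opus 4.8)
The plan is to deduce both assertions from a single fact, the $\R$-linearity of the Levi-Civita covariant derivative. First I would verify the linearity claim directly: for Killing tensors $K, L \in \mathcal{K}(M)$ and constants $\lambda, \mu \in \R$, the defining properties of a connection give $\nabla(\lambda K + \mu L) = \lambda \nabla K + \mu \nabla L$, because $\nabla$ is additive and pulls constant scalars through (the Leibniz term vanishes since $\lambda, \mu$ are constant). Hence $K \mapsto \nabla K$ is a linear map out of $\mathcal{K}(M)$, which we already know to be finite-dimensional.

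Next I would make precise what ``polynomial in $K$'' should mean and then read off the second assertion. Fixing a basis $K_1, \ldots, K_N$ of $\mathcal{K}(M)$, I would write an arbitrary Killing tensor as $K = \sum_{i=1}^N x_i K_i$ and, by linearity, $\nabla K = \sum_{i=1}^N x_i \nabla K_i$. Thus the components of both $K$ and $\nabla K$ are linear forms in the coordinates $x_1, \ldots, x_N$. Substituting these into any polynomial expression in the components of $K$ and $\nabla K$ yields a polynomial in the $x_i$ — that is, a polynomial function on the vector space $\mathcal{K}(M)$ — in which $\nabla K$ contributes no independent variables beyond those already parametrising $K$. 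This is exactly the statement that such an expression is polynomial in $K$ alone.

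I do not anticipate a genuine obstacle, since the result is a formal consequence of linearity; the only point demanding care is purely notational, namely the convention that ``polynomial in $K$'' means a polynomial in the coordinates $x_i$ of $K$ relative to a fixed basis of $\mathcal{K}(M)$. The reason this matters — to be exploited by the observations that follow — is that the Nijenhuis integrability conditions \eqref{eq:Nijenhuis}, which are manifestly polynomial in $K$ and $\nabla K$, thereby become polynomial equations in the coordinates of $K$, so that the Nijenhuis integrable Killing tensors $\mathcal{I}(M)$ form a subset of $\mathcal{K}(M)$ cut out by polynomials.
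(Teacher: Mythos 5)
Your proof is correct and follows the same reasoning the paper intends: the Observation is stated without a separate proof precisely because it is a formal consequence of the linearity of $\nabla$, which you verify and then make precise by expanding $K=\sum_i x_iK_i$ in a basis of $\mathcal K(M)$ so that the components of both $K$ and $\nabla K$ become linear forms in the coordinates $x_i$. Your closing remark about what ``polynomial in $K$ alone'' means is exactly the reading the paper uses in the subsequent observation to conclude that the Nijenhuis conditions cut out polynomial equations on $\mathcal K(M)$.
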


\begin{Observation}	\label{obs:last}
	The Nijenhuis equations are homogeneous algebraic equations in $($the
	components of$)$ $K$ and its covariant derivative~$\nabla K$. This implies
	that the Nijenhuis integrability conditions define polynomial equations on
	the space~$\mathcal K(M)$ of Killing tensors.
\end{Observation}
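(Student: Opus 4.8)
The plan is to read off the claim almost directly from the explicit form of the Nijenhuis conditions~\eqref{eq:Nijenhuis}, using the two preceding observations. First I would examine the three equations term by term. Each summand is a complete contraction (via the metric, which raises and lowers indices) of a product of copies of the endomorphism $K$ with exactly one factor of the covariant derivative $\nabla K$, leaving the free indices $a,b,c$ and the antisymmetrisation untouched. Hence, as a tensorial expression in the components of $K$ and $\nabla K$, the left-hand side of each equation is a homogeneous polynomial: linear in $\nabla K$ and of degree $1$, $2$, $3$ in $K$ for the three equations respectively, so homogeneous of joint degree $2$, $3$, $4$ in the pair $(K,\nabla K)$.

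Next I would eliminate $\nabla K$ as an independent quantity. By the observation that $K\mapsto\nabla K$ is linear, together with the finite-dimensionality of $\mathcal K(M)$, I fix a basis $K_1,\dots,K_N$ of $\mathcal K(M)$ and write a general Killing tensor as $K=x^1K_1+\cdots+x^NK_N$. Then $\nabla K=x^1\nabla K_1+\cdots+x^N\nabla K_N$ is likewise linear in the coordinates $(x^1,\dots,x^N)$. Substituting both into~\eqref{eq:Nijenhuis} and expanding the contractions, every component of each Nijenhuis expression turns into a homogeneous polynomial in $(x^1,\dots,x^N)$, of degree $2$, $3$, $4$ respectively, whose coefficients are functions on $M$ assembled from metric contractions of the $K_i$ and $\nabla K_i$.

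Finally I would interpret this pointwise. Evaluating the coefficients at any point $p\in M$ and fixing the free indices yields an honest homogeneous polynomial in $(x^1,\dots,x^N)$ with constant coefficients, and $K$ is Nijenhuis integrable precisely when all of these vanish simultaneously. Thus the Nijenhuis conditions are a family of homogeneous polynomial equations on $\mathcal K(M)$, as claimed. The one point requiring care --- which I expect to be the main (if mild) obstacle --- is that this family is a priori infinite, since $p$ ranges over all of $M$; but as $\R[x^1,\dots,x^N]$ is Noetherian, the ideal generated by the family is finitely generated, so finitely many of the equations already cut out the same locus. Because every generator is homogeneous, this locus is a cone and hence defines a projective subvariety of $\P(\mathcal K(M))$, which is precisely the refinement needed for the Main Theorem.
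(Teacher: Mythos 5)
Your proof is correct and takes essentially the same approach as the paper: the homogeneity of degrees $2$, $3$, $4$ (linear in $\nabla K$) is read off directly from \eqref{eq:Nijenhuis}, the linearity of $K\mapsto\nabla K$ turns these into homogeneous polynomials in the coordinates of $K$ alone, and the a priori infinite family of equations (one per point $p\in M$) is reduced to a finite set by Hilbert's basis theorem, exactly as in your Noetherianity step. The only difference is presentational: the paper distributes these steps over the preceding Observation, the statement itself, and a remark following Observation~\ref{obs:last}$'$, whereas you assemble them into a single argument.
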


Let us reformulate this in algebraic geometric terms.

\begin{ObservationBis}{obs:last}
	The set $\mathcal I(M)$ of Nijenhuis integrable Killing tensors forms a
	projective variety in the space $\mathcal K(M)$ of Killing tensors.
\end{ObservationBis}

A priori, the Nijenhuis conditions yield inf\/initely many algebraic equations
on $\mathcal K(M)$, as they have to be satisf\/ied at every point of $M$. By
Hilbert's basis theorem these can be reduced to a~f\/inite set.
For an explicit example see~\cite{Schoebel12}.
Note that we
will not, as usually done, exclude Nijenhuis integrable Killing tensors with
multiple eigenvalues since this would destroy the property of $\mathcal I(M)$
to be a variety.

By def\/inition, a St\"ackel system is a projective $(n-1)$-plane contained in the
variety $\mathcal I(M)$. Observation~\ref{obs:Benenti} then tells us that
$\mathcal I(M)$ has a very particular structure: It is ruled by
$(n-1)$-planes. For an explicit example see~\cite{Schoebel14}.

\subsection{Isometry group action}

We seek to classify separable coordinates up to isometries. We will therefore
conclude this section with three observations concerning the isometry group
action.

\begin{Observation}
	The Killing equation~\eqref{eq:Killing} is invariant under isometries, so
	that the isometry group acts linearly on the space of Killing tensors.
\end{Observation}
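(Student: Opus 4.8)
The plan is to reduce everything to the single fact that an isometry preserves not merely the metric but also its Levi-Civita connection, and then to observe that the Killing operator is built entirely out of that connection. First I would recall that an isometry is a diffeomorphism $\phi\colon M\to M$ with $\phi^*g=g$. Since the Levi-Civita connection is the unique torsion-free connection compatible with the metric---the Koszul formula expresses $\nabla$ purely in terms of $g$---and $\phi^*$ preserves $g$, the pullback must carry $\nabla$ to itself. Concretely this yields the naturality relation $\phi^*(\nabla T)=\nabla(\phi^*T)$ for every tensor field $T$: covariant differentiation commutes with pullback by an isometry.

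Second, I would feed this into the Killing equation~\eqref{eq:Killing}. Writing $\mathcal D$ for the linear differential operator $K\mapsto\nabla_aK_{bc}+\nabla_bK_{ca}+\nabla_cK_{ab}$ on its left-hand side, the previous step shows that $\mathcal D$ commutes with $\phi^*$, since $\mathcal D$ is assembled from $\nabla$ and the (pullback-invariant) cyclic symmetrisation alone. Hence if $K$ is a Killing tensor, so that $\mathcal D K=0$, then $\mathcal D(\phi^*K)=\phi^*(\mathcal DK)=\phi^*(0)=0$, i.e.\ $\phi^*K$ is again a Killing tensor. Thus $\phi^*$ maps $\mathcal K(M)$ into itself.

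Finally, I would promote this to a group action. Pullback of tensor fields is a linear operation, and setting $\phi\cdot K:=(\phi^{-1})^*K$ turns the assignment into a genuine left action of the isometry group $G$ on the vector space $\mathcal K(M)$: the identity $(\phi\circ\psi)^*=\psi^*\circ\phi^*$ reverses composition, while the extra inverse restores the correct order. Linearity of the action is immediate from linearity of pullback, and invariance of $\mathcal K(M)$ is precisely what the second step established.

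I do not anticipate a serious obstacle, as the whole statement is essentially the equivariance of a natural operator. The only point requiring a genuine argument rather than formal bookkeeping is the first step---that an isometry carries the Levi-Civita connection to itself---and even this is an immediate consequence of the uniqueness clause in the fundamental theorem of Riemannian geometry.
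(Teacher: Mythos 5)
Your proposal is correct, and it matches the reasoning the paper leaves implicit: this Observation is stated without proof, being regarded as self-evident precisely because the Killing operator is a natural differential operator built from the Levi-Civita connection, which is what your first two steps make explicit. Your third step (inserting the inverse to turn pullback into a genuine left linear action) is the standard bookkeeping and introduces no gap.
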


\begin{Observation}
	The Nijenhuis integrability conditions \eqref{eq:Nijenhuis} and therefore
	the set $\mathcal I(M)$ of Nijenhuis integrable Killing tensors are
	invariant under isometries.
\end{Observation}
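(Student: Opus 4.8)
The plan is to exploit naturality. The statement has two parts: invariance of the Nijenhuis conditions \eqref{eq:Nijenhuis} as equations, and invariance of the solution set $\mathcal I(M)$; since the second follows formally from the first, I would concentrate on the first.

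First I would recall that an isometry $\phi\colon M\to M$ is by definition a diffeomorphism preserving the metric, $\phi^*g=g$, and hence also its inverse, $\phi^*g^{-1}=g^{-1}$. Because the Levi-Civita connection is uniquely determined by the metric, $\phi$ preserves it as well, and this is the crucial point: covariant differentiation is then natural with respect to isometries, meaning $\phi^*(\nabla K)=\nabla(\phi^*K)$ for every Killing tensor $K$. Combined with the previous observation — that $\phi^*K$ is again a Killing tensor, so that the isometry group acts on $\mathcal K(M)$ — this shows that both building blocks $K$ and $\nabla K$ appearing in \eqref{eq:Nijenhuis} transform by simple pullback.

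Next I would observe that the left-hand sides of the three Nijenhuis equations are assembled from $K$ and $\nabla K$ using only two further operations: raising and lowering indices with the metric, and tensor contraction. Both are manifestly natural with respect to isometries, the former because $\phi$ preserves $g$ and $g^{-1}$, the latter because pullback commutes with contraction. Writing $N_i(K)$ for the tensor on the left of the $i$-th equation, it then follows that $N_i(\phi^*K)=\phi^*\bigl(N_i(K)\bigr)$. Hence $N_i(K)$ vanishes identically on $M$ if and only if $N_i(\phi^*K)$ does, which is precisely the invariance of the Nijenhuis conditions under isometries.

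Finally, invariance of the set $\mathcal I(M)$ is immediate: $K\in\mathcal I(M)$ means all $N_i(K)=0$, and by the above this holds exactly when all $N_i(\phi^*K)=0$, that is, $\phi^*K\in\mathcal I(M)$. I do not expect a genuine obstacle here, since the entire content is the naturality of the constituent operations; the only step requiring a moment's care is the naturality of the covariant derivative, which rests on the fact that isometries preserve the full Levi-Civita connection and not merely the metric tensor itself.
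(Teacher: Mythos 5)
Your proof is correct and is exactly the argument the paper implicitly relies on: the paper states this as an unproven Observation, treating it as immediate from the fact that every ingredient of the Nijenhuis conditions ($K$, $\nabla K$, index raising/lowering with $g$, contraction, antisymmetrisation) is natural under isometries, which is precisely the naturality chain you spell out. Your identification of the key point~--- that isometries preserve the Levi-Civita connection, so pullback commutes with $\nabla$~--- is the right one, and no gap remains.
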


\begin{Observation}
	The property that two Killing tensors commute is invariant under
	isometries.
\end{Observation}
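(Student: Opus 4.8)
The plan is to build on the preceding observation, which tells us that an isometry $\phi$ of $M$ acts on the space $\mathcal K(M)$ of Killing tensors by pullback $K\mapsto\phi^*K$. What remains is to check that this pullback action is compatible with the algebraic structure used to define commuting, namely the pointwise commutator of the associated endomorphism fields. The crucial point is that, under the identification of symmetric forms with endomorphisms via the metric, an isometry acts on endomorphism fields by \emph{pointwise conjugation} with its differential.

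To make this precise I would first note that raising an index with the metric commutes with pullback along an isometry: since $\phi^*g=g$, also $\phi^*g^{-1}=g^{-1}$, so the endomorphism associated with $\phi^*K$ coincides with the pullback of the endomorphism associated with $K$. For a $(1,1)$-tensor field this pullback is, at each point $p$, the conjugation
\begin{equation*}
	(\phi^*K)_p=(\mathrm d\phi_p)^{-1}\circ K_{\phi(p)}\circ\mathrm d\phi_p.
\end{equation*}
Because conjugation by an invertible linear map is an algebra automorphism of the endomorphisms of each tangent space, it preserves the commutator bracket, giving
\begin{equation*}
	\bigl[\phi^*K,\phi^*L\bigr]_p=(\mathrm d\phi_p)^{-1}\circ[K,L]_{\phi(p)}\circ\mathrm d\phi_p
\end{equation*}
for any two Killing tensors $K$ and $L$. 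If $K$ and $L$ commute, then $[K,L]$ vanishes at every point of $M$, in particular at $\phi(p)$, so the right-hand side is zero for all $p$; hence $\phi^*K$ and $\phi^*L$ commute as well, which is the assertion.

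I expect no serious obstacle here, as the statement is purely algebraic once the geometry is stripped away. The only step requiring care is the compatibility between the metric-based identification of forms with endomorphisms and the pullback, i.e.\ verifying that an isometry really does act by conjugation rather than by some other transformation; this is exactly where the hypothesis $\phi^*g=g$ enters. After that, preservation of commutators is immediate from the fact that conjugation is a ring homomorphism. In the pseudo-Riemannian setting the eigenvalues and eigenvectors may become complex, but this does not affect the argument, since the vanishing of the commutator is a purely algebraic relation that is insensitive to the signature.
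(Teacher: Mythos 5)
Your proof is correct. The paper states this observation without any proof, treating it as self-evident; your argument --- that the metric identification of forms with endomorphisms is equivariant because $\phi^*g=g$, that an isometry therefore acts on endomorphism fields by pointwise conjugation with its differential, and that conjugation preserves commutators --- is precisely the reasoning the paper implicitly relies on, spelled out in full.
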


Combining the preceding two observations, we arrive at:

\begin{Observation}\label{obs:groupaction}
	The isometry group maps St\"ackel systems to St\"ackel systems.
\end{Observation}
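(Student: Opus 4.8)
The plan is to show that the linear action of the isometry group on $\mathcal K(M)$ carries the defining data of a St\"ackel system to defining data of another St\"ackel system, so that the image of a St\"ackel system is again one. Fix an isometry $\phi\in G$ and write $\phi_*$ for its induced map on $\mathcal K(M)$, which is a linear isomorphism by the earlier observation that the Killing equation is isometry-invariant. Let $V$ be a St\"ackel system, so that by definition $V$ is spanned by the metric $g$ together with $n-1$ mutually commuting Killing tensors $K_1,\ldots,K_{n-1}$ having surface normal eigenvectors.

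First I would record that an isometry preserves the metric, $\phi_*g=g$, so that $g\in\phi_*V$. Since $\phi_*$ is a linear isomorphism, it sends the spanning set $\{g,K_1,\ldots,K_{n-1}\}$ of $V$ to the set $\{g,\phi_*K_1,\ldots,\phi_*K_{n-1}\}$, which spans the image $\phi_*V$; in particular $\phi_*V$ is again an $n$-dimensional subspace of $\mathcal K(M)$. It then remains only to verify that this new spanning set meets the two structural requirements in the definition of a St\"ackel system.

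For commutativity, the observation that the property of two Killing tensors commuting is invariant under isometries immediately yields that $\phi_*K_i$ and $\phi_*K_j$ commute whenever $K_i$ and $K_j$ do. For the surface normal eigenvector condition I would invoke Nijenhuis's characterisation: each $K_i$ has surface normal eigenvectors precisely because it is Nijenhuis integrable, that is, $K_i\in\mathcal I(M)$. Since $\mathcal I(M)$ is invariant under isometries by the preceding observation, we get $\phi_*K_i\in\mathcal I(M)$, so each $\phi_*K_i$ is again Nijenhuis integrable and hence again has surface normal eigenvectors. Collecting these facts, $\phi_*V$ is an $n$-dimensional space spanned by the metric and $n-1$ mutually commuting Killing tensors with surface normal eigenvectors, which is exactly a St\"ackel system.

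I do not expect a genuine obstacle here: the statement is a direct assembly of the two immediately preceding observations together with the invariance of the metric. The only point demanding a little care is the passage between the geometric notion of \emph{surface normal eigenvectors} and the analytic notion of \emph{Nijenhuis integrability}, which is what allows me to transport the eigenvector condition by appealing to the isometry-invariance of $\mathcal I(M)$ rather than re-establishing invariance of the eigenvector condition directly from its definition.
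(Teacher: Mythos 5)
Your proof is correct and follows essentially the same route as the paper: the paper offers no separate argument for this observation, stating it as an immediate combination of the two preceding invariance observations (isometry-invariance of $\mathcal I(M)$ and of the commutation property) together with the linear isometry action on $\mathcal K(M)$, which is precisely what you have spelled out. One small point to make your final step airtight: since $\mathcal I(M)$ deliberately also contains Killing tensors with multiple eigenvalues (the paper keeps them so that $\mathcal I(M)$ remains a variety), the inference ``$\phi_*K_i$ is Nijenhuis integrable, hence has surface normal eigenvectors'' additionally requires that $\phi_*K_i$ still has simple eigenvalues; this is immediate because an isometry acts pointwise by conjugation with a linear isometry of tangent spaces and therefore preserves the spectrum and its multiplicities.
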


\section{The foundation}

We now put together all observations made so far in order to prove our Main
Theorem. We will employ three standard results from algebraic geometry:

\begin{fact}[\cite{Harris}]
	\label{fact:nested}
	The set of pairs of nested planes of fixed dimensions $k_1\le k_2$ in a
	vector space $V$,
	\begin{gather*}
		\Psi_{k_1,k_2}(V)
		:=\{(U_1,U_2)\colon U_1\subseteq U_2\}
		\subset G_{k_1}(V)\times G_{k_2}(V),
	\end{gather*}
	is a subvariety of the product of the corresponding Grassmannians. It
	comes with two projections onto the factors,
	\begin{gather*}
		G_{k_1}(V)\xleftarrow{\;\pi_1\;}\Psi_{k_1,k_2}(V)\xrightarrow{\;\pi_2\;}G_{k_2}(V).
	\end{gather*}
\end{fact}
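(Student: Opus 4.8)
The plan is to reduce everything to two standard facts and one local computation. First, each Grassmannian $G_{k_i}(V)$ is a projective variety via its Plücker embedding into $\P(\Lambda^{k_i}V)$, and the product $G_{k_1}(V)\times G_{k_2}(V)$ is again a projective variety through the Segre embedding. Since being a subvariety is a local property, it therefore suffices to exhibit $\Psi_{k_1,k_2}(V)$ as a \emph{closed} subset cut out, on each member of a suitable affine open cover, by the vanishing of polynomials. The entire content of the statement thus rests on showing that the incidence relation $U_1\subseteq U_2$ is algebraic; once this is in hand, the subvariety structure follows formally.

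Next I would carry out the local computation. Write $N=\dim V$ and cover $G_{k_i}(V)$ by the standard affine charts, in which a $k_i$-plane is presented as the row space of a $k_i\times N$ matrix whose entries serve as affine coordinates on the chart (the graph coordinates associated with a fixed complementary coordinate subspace). Covering the product by products of such charts, a pair $(U_1,U_2)$ is represented by a pair of matrices $A$ (of size $k_1\times N$) and $B$ (of size $k_2\times N$), each of full row rank. The inclusion $U_1\subseteq U_2$ holds precisely when every row of $A$ lies in the row space of $B$, that is, when the $(k_1+k_2)\times N$ matrix $M$ obtained by stacking $A$ over $B$ has rank equal to $k_2$. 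As $B$ already has rank $k_2$, this is equivalent to $\operatorname{rank}M\le k_2$, i.e.\ to the vanishing of all $(k_2+1)\times(k_2+1)$ minors of $M$. These minors are polynomials in the entries of $A$ and $B$, hence in the chart coordinates, so $\Psi_{k_1,k_2}(V)$ meets each chart in a closed algebraic set. Because closedness is local, $\Psi_{k_1,k_2}(V)$ is closed in the product, and being defined by polynomials it is a projective subvariety.

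Finally, the two projections $\pi_1,\pi_2$ are simply the restrictions to $\Psi_{k_1,k_2}(V)$ of the canonical projections $G_{k_1}(V)\times G_{k_2}(V)\to G_{k_i}(V)$, which are morphisms of varieties; restricting a morphism to a subvariety again yields a morphism (surjectivity onto each factor being clear, since $k_1\le k_2\le N$ lets one freely extend or restrict planes), so nothing further is needed. The only genuinely substantive step is the algebraicity of the incidence condition, which I expect to be the main though modest obstacle: one must check that the rank condition is expressible polynomially and that the chartwise descriptions glue to a single closed condition. The minor formulation handles both at once. Alternatively, one could bypass charts entirely and write global \emph{bilinear} incidence relations directly in the Plücker coordinates of $U_1$ and $U_2$, expressing $U_1\subseteq U_2$ as the divisibility of the Plücker vector of $U_2$ by that of $U_1$; this is more elegant but requires setting up the relevant contraction maps, whereas the affine-chart argument is self-contained.
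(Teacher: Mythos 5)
Your proof is correct, but note that on this point there is nothing in the paper to compare it against: the Fact is stated without proof, as a standard result cited from Harris's textbook (where it appears among the examples of incidence correspondences and flag varieties). So the real comparison is between your self-contained argument and an appeal to the literature, and your argument holds up. The chartwise reduction is the standard proof: the product $G_{k_1}(V)\times G_{k_2}(V)$ is projective via the Segre and Pl\"ucker embeddings, closedness may be verified on the affine product charts, and in such a chart the condition $U_1\subseteq U_2$ is equivalent to $\operatorname{rank}M\le k_2$ for the stacked matrix $M$ --- the reverse inequality $\operatorname{rank}M\ge k_2$ being automatic since $B$ has full row rank --- i.e., to the vanishing of all $(k_2+1)\times(k_2+1)$ minors of $M$, which are polynomials in the chart coordinates; since a Zariski-closed subset of a projective variety is automatically cut out by homogeneous polynomials, ``closed'' does upgrade to ``subvariety'' as you claim. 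Two small remarks: the Fact asserts nothing about surjectivity of $\pi_1$, $\pi_2$, so your parenthetical on extending and restricting planes is not needed; and your alternative global route can be made precise by the bilinear incidence relations $(\iota_\xi\omega_1)\wedge\omega_2=0$ for all $\xi\in\Lambda^{k_1-1}V^*$, where $\omega_i$ denotes the Pl\"ucker vector of $U_i$ --- this is exactly the coordinate-free form of your minor conditions, so either route completes the proof.
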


\begin{fact}[\cite{Harris}]
	\label{fact:Fano}
	The set of $n$-planes contained in a given projective variety
	$X\subseteq\P^d$ forms a~sub\-variety in the Grassmannian of $n$-planes in
	$\P^d$, called the Fano variety $F_n(X)\subseteq G_n(\P^d)$ of~$X$.
\end{fact}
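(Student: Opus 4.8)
The plan is to exhibit $F_n(X)$ directly as a zero locus of homogeneous equations inside the Grassmannian, by translating the geometric condition ``$\Lambda\subseteq X$'' into polynomial conditions on the coordinates of the plane~$\Lambda$.

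Concretely, I would first fix homogeneous generators $f_1,\dots,f_r$ of the homogeneous ideal of $X$, with $\deg f_i=e_i$. On a standard affine chart of $G_n(\P^d)$ a projective $n$-plane $\Lambda$ is given by a basis $v_0(\Lambda),\dots,v_n(\Lambda)$ of the corresponding $(n+1)$-dimensional linear subspace, whose entries are the coordinates on the chart; thus a general point of $\Lambda$ is $\sum_{j=0}^n t_j\,v_j(\Lambda)$. Substituting this into $f_i$ yields
\[
	f_i\Bigl(\sum_{j=0}^n t_j\,v_j(\Lambda)\Bigr)=\sum_{|\alpha|=e_i}c_{i,\alpha}(\Lambda)\,t^\alpha,
\]
a form of degree $e_i$ in the auxiliary parameters $t=(t_0,\dots,t_n)$ whose coefficients $c_{i,\alpha}(\Lambda)$ are themselves polynomials in the entries of the $v_j(\Lambda)$, i.e.\ in the chart coordinates.

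The crucial step is the elimination of the parameters $t$: the plane $\Lambda$ is contained in $X$ precisely when every $f_i$ vanishes at every point of $\Lambda$, and since $f_i(\sum_j t_j v_j(\Lambda))$ is a polynomial in $t$, this happens if and only if all of its coefficients vanish, i.e.\ $c_{i,\alpha}(\Lambda)=0$ for all $i$ and $\alpha$. This replaces the ``for all points of $\Lambda$'' quantifier by finitely many polynomial equations in the chart coordinates, so that $F_n(X)$ is cut out by the closed conditions $c_{i,\alpha}=0$ on each chart.

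It then remains to glue. Since the condition $\Lambda\subseteq X$ is manifestly independent of the chosen basis of $\Lambda$ and of the chart, the locally defined closed sets agree on overlaps and patch together to a closed subvariety $F_n(X)\subseteq G_n(\P^d)$. Alternatively, and more invariantly, one can work with the universal $n$-plane $\Phi=\{(\Lambda,p)\colon p\in\Lambda\}\subset G_n(\P^d)\times\P^d$, a $\P^n$-bundle over the Grassmannian with projections $\pi_1,\pi_2$; here $\pi_2^{-1}(X)$ is closed, its complement is open, and the complement of $F_n(X)$ is its image under $\pi_1$. I expect the main obstacle to be exactly this elimination/descent point: in the explicit approach, checking that the coefficient functions $c_{i,\alpha}$ are genuinely regular and transform consistently between charts; in the invariant approach, justifying that $\pi_1$ is an open map (which holds because it is a Zariski-locally trivial projective bundle), so that the image of the open set is open and $F_n(X)$ is therefore closed.
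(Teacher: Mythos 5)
The paper offers no proof of this Fact at all: it is quoted as a standard result from \cite{Harris}, and your primary argument is essentially the proof given there (Harris treats the Fano variety exactly this way) --- parametrize planes in an affine chart of $G_n(\P^d)$, substitute $\sum_j t_j v_j(\Lambda)$ into homogeneous generators of the ideal of $X$, and eliminate the quantifier over points of $\Lambda$ by requiring all coefficients $c_{i,\alpha}(\Lambda)$ to vanish; this is correct, the only implicit ingredient being that the ground field is infinite, so that a form vanishing at every point has zero coefficients. Your invariant variant via the universal plane $\Phi$ is also sound: the complement of $F_n(X)$ is $\pi_1\bigl(\Phi\setminus\pi_2^{-1}(X)\bigr)$, and openness of $\pi_1$ (a Zariski-locally trivial $\P^n$-bundle, hence locally a product projection, which is open over an infinite field) makes it open, so $F_n(X)$ is closed.
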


\begin{fact}
	\label{fact:commuting}
	The maximal commutative subalgebras consisting of symmetric endomorphisms
	of an $n$-dimensional vector space $V$ form a~subvariety $\mathcal
	C(V)\subseteq G_n(S^2V)$ in the Grassmannian of $n$-planes in~$S^2V$.
\end{fact}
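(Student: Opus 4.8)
The plan is to recast ``maximal commutative subalgebra'' as a condition that is manifestly algebraic on the Grassmannian $G_n(S^2V)$. Identifying symmetric endomorphisms with elements of $S^2V$ via the metric, the product of two \emph{commuting} symmetric endomorphisms is again symmetric, so a commutative subalgebra of symmetric endomorphisms is nothing but a subspace $U\subseteq S^2V$ whose elements mutually commute. I would first record the elementary fact that the maximal such subalgebras are precisely the $n$-dimensional subspaces of pairwise commuting symmetric endomorphisms. Indeed, a family of mutually commuting (semisimple) symmetric endomorphisms is simultaneously diagonalisable; in a common eigenbasis every element of $U$ is diagonal, so $U$ lies in the $n$-dimensional algebra of endomorphisms diagonal in that basis and hence $\dim U\le n$. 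A maximal $U$ must therefore have dimension exactly $n$ and fill out such a diagonal algebra, which is maximal; conversely any $n$-dimensional space of commuting symmetric endomorphisms already exhausts a diagonal algebra by the same count. This reduces the statement to showing that
\[
	\mathcal C(V)=\bigl\{U\in G_n(S^2V)\colon [A,B]=0\ \text{for all}\ A,B\in U\bigr\}
\]
is Zariski-closed.

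For the closedness I would use the tautological bundle. The commutator $c(A,B)=AB-BA$ is bilinear and antisymmetric, so it factors through a fixed linear map $\bar c\colon\Lambda^2 S^2V\to\operatorname{End}(V)$ (its image lying in the skew-symmetric endomorphisms). Over $G_n(S^2V)$ let $\mathcal T$ be the tautological rank-$n$ subbundle, with fibre $\mathcal T_U=U$. Applying $\Lambda^2$ and restricting $\bar c$ yields a morphism of vector bundles $\Lambda^2\mathcal T\to\operatorname{End}(V)\otimes\mathcal O$, i.e.\ a global section of $\operatorname{Hom}\bigl(\Lambda^2\mathcal T,\operatorname{End}(V)\otimes\mathcal O\bigr)$ whose value at $U$ is $\bar c|_{\Lambda^2 U}$. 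By the previous paragraph this section vanishes at $U$ exactly when $U\in\mathcal C(V)$, and the zero locus of a section of a vector bundle is Zariski-closed; hence $\mathcal C(V)$ is a subvariety. Concretely, writing $U=\operatorname{span}(u_1,\dots,u_n)$ in a local chart, the entries of the matrices $[u_i,u_j]$ are polynomials in the coordinates of the $u_k$, and rewriting them in Pl\"ucker coordinates exhibits $\mathcal C(V)$ as a common zero set of homogeneous polynomials.

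The conceptual crux is this closedness, which the bundle-section argument settles cleanly. The one genuinely delicate point is the reduction lemma in the indefinite (pseudo-Riemannian) setting, where ``symmetric'' is taken with respect to a non-definite form: such an endomorphism need not be semisimple, so simultaneous diagonalisation---and with it the clean bound $\dim U\le n$---can break down. I expect this to be the main obstacle, and would handle it either by restricting to the dense locus of semisimple endomorphisms, which is the only case relevant to genuine separable coordinates, or by replacing diagonalisation with the general statement that a commutative algebra of symmetric endomorphisms preserving a common flag still has the asserted maximal dimension. The remaining ingredient, that $\bar c$ induces an \emph{algebraic} bundle morphism, is just functoriality of $\Lambda^2$ applied to the tautological bundle and is routine.
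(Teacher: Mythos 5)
Your bundle-theoretic closedness argument is sound, and over a \emph{definite} metric your reduction is correct as well: pairwise commuting symmetric endomorphisms are simultaneously diagonalisable in an orthonormal basis, so the maximal commutative subalgebras are exactly the pairwise commuting $n$-planes, and these are cut out by the polynomial commutator equations. Note that this is a genuinely different route from the paper's proof, which writes down no equations at all: there $\mathcal C(V)$ is exhibited as the $\SOG(V)$-orbit of the algebra of diagonal endomorphisms (definite case), respectively as a finite union of orbits of normal forms (indefinite case), with closedness coming from the closed orbit lemma. In the definite case your argument is arguably more elementary and more explicit.

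The genuine gap is the indefinite case -- precisely the point you flagged -- and neither of your two proposed repairs closes it. The ``general statement'' invoked in your second fix is false. Take $V=\R^{2m}$ with the split form $g=\left(\begin{smallmatrix}0&I_m\\I_m&0\end{smallmatrix}\right)$ and, for each symmetric $m\times m$ matrix $Q$, set $N_Q=\left(\begin{smallmatrix}0&Q\\0&0\end{smallmatrix}\right)$; each $N_Q$ is $g$-symmetric because $gN_Q$ is a symmetric matrix, and $N_QN_{Q'}=0$ for all $Q$, $Q'$. Hence
\[
\mathcal A=\R\,\mathrm{id}\oplus\bigl\{N_Q\colon Q=Q^T\bigr\}
\]
is a commutative subalgebra of $g$-symmetric endomorphisms of dimension $1+\tfrac{m(m+1)}{2}$, which exceeds $n=2m$ as soon as $m\ge3$; one checks moreover that $\mathcal A$ equals its own centraliser among the $g$-symmetric endomorphisms, so it is a \emph{maximal} commutative subalgebra of dimension $>n$. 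Consequently any $n$-dimensional subspace of $\mathcal A$ is a pairwise commuting $n$-plane that is \emph{not} a maximal commutative subalgebra, so your identification of $\mathcal C(V)$ with the commuting locus in $G_n(S^2V)$ fails: the commuting locus is strictly larger. (The example also shows that maximality alone does not force dimension $n$, so in the indefinite case even the formulation of the Fact leans on the classification into normal forms.) Your first fix -- restricting to the semisimple locus -- does not prove the Fact either: it replaces the set in question by a non-closed subset, surrendering exactly the variety property to be established, and it runs against the paper's explicit policy of keeping degenerate objects so as not to destroy variety structures. This is why the paper never attempts to characterise $\mathcal C(V)$ by commutation equations, but instead takes the finitely many normal forms of maximal commuting subalgebras as input and realises $\mathcal C(V)$ as the finite union of their orbits, each closed by the closed orbit lemma. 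To salvage your approach you would need additional algebraic equations singling out the maximal commuting $n$-planes among all commuting ones, and producing these is no longer routine.
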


\begin{proof}
	For def\/inite metrics the statement simply follows from the closed orbit
	lemma \cite{Borel}. Indeed, the space of diagonal endomorphisms def\/ines a
	point in $G_n(S^2V)$ and $\mathcal C(V)$ is the orbit of this point under
	the action of the algebraic group $\SOG(V)$. For indef\/inite metrics we
	have f\/initely many normal forms for maximal commuting subalgebras, each of
	them def\/ining a point in~$G_n(S^2V)$. By the closed orbit lemma, the
	orbit of each of these points def\/ines a~subvariety. Since~$\mathcal C(V)$
	is the union of these orbits, it is a~union of f\/initely many subvarieties
	and hence itself a subvariety.
\end{proof}

Our Main Theorem will be a simple consequence of the following lemma.

\begin{lemma}
	Let $X\subseteq G_n(V_1)$ and $Y\subseteq G_n(V_1\oplus V_2)$ be
	subvarieties. Then the set $Z=\{y\in Y\colon$ $\exists\, x\in X\colon p_1(y)\subseteq
	x\}$, where~$p_1$ is the projection onto the first factor, is a projective
	variety.
\end{lemma}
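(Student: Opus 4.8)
The plan is to recast the condition defining $Z$ as a containment between planes of \emph{fixed} dimensions, which is exactly the setting of Fact~\ref{fact:nested}. Write $V:=V_1\oplus V_2$ and let $p_1\colon V\to V_1$ be the projection. The observation on which everything hinges is that, for an $n$-plane $y\subseteq V$ and an $n$-plane $x\subseteq V_1$,
\begin{gather*}
	p_1(y)\subseteq x
	\qquad\Longleftrightarrow\qquad
	y\subseteq x\oplus V_2,
\end{gather*}
because $p_1^{-1}(x)=x\oplus V_2$. This trades the image $p_1(y)$, whose dimension $n-\dim(y\cap V_2)$ jumps as $y$ varies, for the subspace $x\oplus V_2$ of \emph{constant} dimension $n+\dim V_2$.

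With this in hand I would first promote $X$ into a Grassmannian of planes of that fixed dimension. The map $\phi\colon x\mapsto x\oplus V_2$ is a morphism $G_n(V_1)\to G_{n+\dim V_2}(V)$ --- in fact an isomorphism onto the closed subvariety of those $(n+\dim V_2)$-planes of $V$ that contain the fixed subspace $V_2$. Since a morphism of projective varieties is a closed map, the image $\phi(X)\subseteq G_{n+\dim V_2}(V)$ is again a subvariety.

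Next I would invoke Fact~\ref{fact:nested} with $k_1=n$ and $k_2=n+\dim V_2$, giving the variety $\Psi:=\Psi_{n,\,n+\dim V_2}(V)$ of nested pairs together with its projections $\pi_1,\pi_2$ onto $G_n(V)$ and $G_{n+\dim V_2}(V)$. As $\pi_1,\pi_2$ are morphisms, the preimages $\pi_1^{-1}(Y)$ and $\pi_2^{-1}(\phi(X))$ are closed, and hence so is their intersection
\begin{gather*}
	\Gamma:=\pi_1^{-1}(Y)\cap\pi_2^{-1}\bigl(\phi(X)\bigr)
	=\bigl\{(U_1,U_2)\in\Psi\colon U_1\in Y,\ U_2\in\phi(X)\bigr\}.
\end{gather*}
A pair lies in $\Gamma$ precisely when $U_1\in Y$, $U_2=x\oplus V_2$ for some $x\in X$, and $U_1\subseteq U_2$; by the equivalence above the last two conditions say $p_1(U_1)\subseteq x$. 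Hence $\pi_1(\Gamma)=Z$. Since $\Psi$ and $G_n(V)$ are projective and $\pi_1$ is a morphism, $\pi_1$ is closed, so $Z=\pi_1(\Gamma)$ is closed in $G_n(V)$ and is therefore a projective variety.

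The step I expect to demand the most care is the very first one. A naive attack stratifying $Y$ by the value of $\dim p_1(y)$ produces only locally closed sets and splits the projection $p_1$ into pieces landing in different Grassmannians; the reformulation $y\subseteq x\oplus V_2$ is precisely what collapses all these strata into a single closed incidence condition. Once that is set up, the remaining ingredients --- that $\phi$ is a morphism and that morphisms of projective varieties are closed --- are entirely standard.
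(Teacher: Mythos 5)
Your proposal is correct and takes essentially the same route as the paper's proof: the same key equivalence $p_1(y)\subseteq x\Leftrightarrow y\subseteq x\oplus V_2$, the same embedding $x\mapsto x\oplus V_2$ into $G_{n+\dim V_2}(V_1\oplus V_2)$, and the same use of Fact~\ref{fact:nested} together with closedness of the projections to realise $Z$ as the image under $\pi_1$ of a closed subset of the incidence variety. The only cosmetic difference is how the image of the embedding is certified to be a variety: the paper applies Fact~\ref{fact:nested} a second time (with $k_1=\dim V_2$, $k_2=n+\dim V_2$, pulling back the point $\{0\oplus V_2\}$), whereas you invoke the standard fact that a morphism of projective varieties is a closed map.
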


\begin{proof}
	First note that the condition $p_1(y)\subseteq x$ is equivalent to
	$y\subseteq p^{-1}(x)=x\oplus V_2$. Mapping $x\mapsto x\oplus V_2$ gives
	an embedding $\iota\colon G_n(V_1)\hookrightarrow G_m(V_1\oplus V_2)$, where
	$m=n+\dim V_2$. Indeed, taking $k_1=\dim V_2\le k_2=m$ and $V=V_1\oplus
	V_2$ in Fact~\ref{fact:nested}, the image is the variety
	$\pi_2\bigl(\pi_1^{-1}(\{0\oplus V_2\})\bigr)$. Applying
	Fact~\ref{fact:nested} once again, this time for $k_1=n\le k_2=m$, we
	obtain the variety $Z=\pi_1\bigl(\pi_2^{-1}(\iota(X))\bigr)\cap Y$.
\end{proof}

\begin{proof}[Proof of the Main Theorem]
	Fact~\ref{fact:Fano} implies that all $n$-planes contained in the variety
	$\mathcal I(M)$ $\subseteq\mathcal K(M)$ form a subvariety $F_n(\mathcal
	I(M))\subseteq G_n (\mathcal K(M) )$. These $n$-planes
	comprise St\"ackel systems, but not all of them are St\"ackel systems. We
	have to prove that the St\"ackel systems among them form a subvariety. This
	results from Fact~\ref{fact:commuting} as follows. We have already
	mentioned, that the Killing equation~\eqref{eq:Killing} is overdetermined
	of f\/inite type. To be more precise, all derivatives of a~Killing tensor
	$K_{ab}$ can be expressed as linear combinations of
	\begin{gather*}
		K_{ab} \in\Gamma(\,\yng(2)\,T^*\!M),\\
		\nabla_aK_{bc} \in\Gamma(\,\yng(2,1)\,T^*\!M),\\
		Y(\nabla_a\nabla_cK_{bd}) \in\Gamma(\,\yng(2,2)\,T^*\!M),
	\end{gather*}
	the coef\/f\/icients being polynomial in the Riemannian curvature tensor~\cite{Wolf}. Here
	\begin{itemize}\itemsep=0pt
		\item $\yng(2)\,T^*\!M=S^2T^*\!M$
			is the bundle of symmetric forms~$K_{ab}$ on~$M$;
		\item $\yng(2,1)\,T^*\!M$
			denotes the bundle of tensors~$T_{abc}$ on~$M$ with the symmetries
			\begin{gather*}
				T_{acb} =T_{abc},\qquad
				T_{abc}+T_{bca}+T_{cab} =0;
			\end{gather*}
		\item $\yng(2,2)\,T^*\!M$
			denotes the bundle of tensors $R_{abcd}$ with curvature tensor
			symmetry and~$Y$ stands for the corresponding Young projector
			(given by symmetrising in~$a$, $c$ and $b$, $d$, followed by an
			antisymmetrisation in $a$, $b$ and $c$, $d$).
	\end{itemize}
	This gives a linear evaluation map
	\begin{gather*}
		\operatorname{ev}_p\colon \
		\mathcal K(M)
		\longrightarrow
		\mathcal T_p
	\end{gather*}
	for each point $p\in M$, where
	\begin{gather*}
		\mathcal T_p:=
		\yng(2)\,T_p^*\!M\;\oplus\;
		\yng(2,1)\,T_p^*\!M\;\oplus\;
		\yng(2,2)\,T_p^*\!M
	\end{gather*}
	is the f\/ibre over $p$ of the tractor bundle $\mathcal TM$ corresponding to
	the prolongation of the Killing equation \eqref{eq:Killing}. This map is
	injective and gives an embedding
	\begin{gather}
		\label{eq:embedding}
		\iota\colon \ G_n\bigl(\mathcal K(M)\bigr)\hookrightarrow G_n(\mathcal T_p).
	\end{gather}
	On the other hand, by Fact~\ref{fact:commuting} we have an inclusion
	$\mathcal C(T_p^*\!M)\subseteq G_n(\yng(2)\,T_p^*\!M)$. We can now apply
	the above lemma to the varieties
	\begin{gather*}
		X =\mathcal C(T_p^*\!M)\subseteq G_n(\yng(2)\,T_p^*\!M),\qquad
		Y =\iota\bigl(F_n(\mathcal I(M))\bigr)\subseteq G_n(\mathcal T_p)
	\end{gather*}
	to obtain a variety $Z\subseteq G_n(\mathcal T_p)$. Its pull back
	$\iota^{-1}(Z)\subseteq G_n\bigl(\mathcal K(M)\bigr)$ under the embedding
	\eqref{eq:embedding} consists of all $n$-planes of Nijenhuis integrable
	Killing tensors whose members commute at the f\/ixed point~$p$. The
	intersection of these varieties for all $p\in M$ consists of $n$-planes of
	Nijenhuis integrable Killing tensors whose members commute at \emph{every}
	point~$p\in M$. By def\/inition, these are the St\"ackel systems on $M$.
	This proves the f\/irst part of the Main Theorem.

	By Observation~\ref{obs:groupaction} the variety $\mathcal S(M)$ is
	invariant under the induced action of the isometry group on
	$G_n(\mathcal K(M))$. This proves the second part.
\end{proof}

\section{The proof of concept}

Let $M=\S^n\subset\R^{n+1}$ denote the standard round sphere of dimension $n$.
It has the orthogonal group $\OG(n+1)$ as isometry group with Lie algebra
\begin{gather*}
	\mathfrak{so}(n+1)=\bigl\langle e_i\wedge e_j\colon 0\le i<j\le n\bigr\rangle.
\end{gather*}
Since the space of Killing tensors on constant sectional curvature spaces is
known to be generated by symmetric products of Killing vectors, we can embed
the space of Killing tensors into the space of symmetric forms on the Lie
algebra of the isometry group,
\begin{gather*}
	\mathcal K(\S^n)\subseteq S^2\mathfrak{so}(n+1).
\end{gather*}

\subsection{Normal forms}

While $\mathcal S(M)$ is a variety, this property will in general be lost for
the quotient $\mathcal S(M)/G$. As a remedy, one could consider the geometric
invariant theory quotient $\mathcal S(M)/\!/G$ instead or try to f\/ind a slice
for the $G$-action, i.e., a subvariety $\mathcal S_0(M)\subset\mathcal S(M)$
with f\/inite stabiliser which intersects every group orbit transversally. In
the latter case we would still deal with a variety which parametrises isometry
classes of separable coordinates, but trade this of\/f against having a unique
parametrisation. This is acceptable, since the ambiguity is only f\/inite. In
the case of spheres, such a slice is provided by the following result.

\begin{theorem}[\cite{Boyer&Kalnins&Winternitz}]
	Up to an isometry every St\"ackel system is contained in
	\begin{gather*}
		\mathcal K_0(S^n):=\bigl\langle(e_i\wedge e_j)^2\colon 0\le i<j\le n\bigr\rangle.
	\end{gather*}
\end{theorem}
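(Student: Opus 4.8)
The plan is to exploit the embedding $\mathcal K(\S^n)\subseteq S^2\mathfrak{so}(n+1)$ by reinterpreting Killing tensors as algebraic curvature tensors on $\R^{n+1}$, and then to show that Nijenhuis integrability forces the associated curvature operators to be simultaneously diagonal in a basis of decomposable bivectors. Concretely, under the isomorphism $\mathcal K(\S^n)\isom\yng(2,2)\,\R^{n+1}$ onto the space of algebraic curvature tensors (the orthogonal complement of $\Lambda^4\R^{n+1}$ in $S^2\Lambda^2\R^{n+1}=S^2\mathfrak{so}(n+1)$), each Killing tensor $K$ corresponds to a self-adjoint operator $\hat K$ on $\Lambda^2\R^{n+1}$ obeying the first Bianchi identity. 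The metric corresponds to the identity operator, and a direct check shows that $\mathcal K_0(\S^n)$ is precisely the set of those $K$ for which $\hat K$ is diagonal in the standard decomposable basis $\{e_i\wedge e_j\}$, with eigenvalue $c_{ij}$ on $e_i\wedge e_j$. Thus the theorem is equivalent to the statement that, after an orthogonal change of frame in $\R^{n+1}$, the operators of a St\"ackel system are simultaneously diagonalised by a decomposable basis $\{f_i\wedge f_j\}$ coming from an orthonormal frame $(f_i)$ of $\R^{n+1}$.

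First I would record the link between the endomorphism field on $\S^n$ and the operator on $\Lambda^2\R^{n+1}$. At a point $x\in\S^n$ the Killing tensor acts on $T_x\S^n=x^\perp$ by the symmetric endomorphism $K_x$ determined by $\langle K_x u,v\rangle=\langle\hat K(x\wedge u),x\wedge v\rangle$. The members of the St\"ackel system all share a common orthonormal eigenframe field, namely the surface-normal eigenvector fields defining the separable coordinates. The task is to show that this family of frames is governed by a \emph{single fixed} orthonormal frame of $\R^{n+1}$, which is exactly what is needed for all the $\hat K$ to be diagonal in one decomposable basis.

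The hard part is precisely this rigidity, and it is where the Nijenhuis conditions enter decisively; it is also the genuine content of the Boyer--Kalnins--Winternitz classification. The strategy I would follow is the classical confocal-quadrics argument: the surface-normal eigenvector fields of a generic member $K$ of the system (one with simple eigenvalues, which exists by the very definition of a St\"ackel system) define $n$ mutually orthogonal hypersurface foliations of $\S^n$, and on a space of constant curvature the integrability conditions force the leaves to be intersections of $\S^n$ with a \emph{confocal} family of quadrics in $\R^{n+1}$. The principal axes of such a family constitute a fixed orthonormal frame $(f_i)$ of $\R^{n+1}$, independent of $x$. In this frame $K$ is the characteristic (Benenti) tensor of a diagonal matrix $A=\operatorname{diag}(a_0,\dots,a_n)$, and the classical elliptic St\"ackel system it generates consists entirely of linear combinations of the $(f_i\wedge f_j)^2$. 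Since Observation~\ref{obs:Benenti} guarantees that $K$ lies in a \emph{unique} St\"ackel system, this elliptic system is the one we started from. I expect the true obstacle to be the degenerate systems, where the confocal family acquires coincident or infinite foci: there the foliation splits off lower-dimensional spheres, and one must argue by induction on $n$, applying the result on each factor $\S^{k}$ of the block decomposition. The decisive point is that every such reduction is realised by an orthogonal splitting $\R^{n+1}=\bigoplus V_\alpha$, so the frames produced on the factors assemble into one orthonormal frame of $\R^{n+1}$ and the diagonal form is preserved.

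Finally I would invoke the isometry group to put the normal form in place. The orthonormal frame $(f_i)$ obtained above differs from the standard frame $(e_i)$ by some $g\in\OG(n+1)$ with $g e_i=f_i$. By Observation~\ref{obs:groupaction} the induced action of $g^{-1}$ sends the St\"ackel system to another one, whose characteristic tensor is now diagonal in $\{e_i\wedge e_j\}$; by the uniqueness just used, this image is the elliptic system of $\operatorname{diag}(a_i)$ and therefore lies in $\mathcal K_0(\S^n)$. Hence the original system is carried into $\mathcal K_0(\S^n)$ by an isometry, which is the assertion of the theorem.
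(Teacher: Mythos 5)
First, a point of comparison: the paper does \emph{not} prove this statement at all --- it is imported verbatim from Boyer--Kalnins--Winternitz \cite{Boyer&Kalnins&Winternitz} (in substance, from the Kalnins--Miller classification), so there is no internal proof to measure your attempt against. Your surrounding reductions are correct and match the framework the paper and its companion works use: $\mathcal K(\S^n)$ is indeed isomorphic to the space of algebraic curvature tensors on $\R^{n+1}$, with $\langle K_xu,v\rangle=\langle\hat K(x\wedge u),x\wedge v\rangle$; $\mathcal K_0(\S^n)$ is exactly the set of forms diagonal in the basis $\{e_i\wedge e_j\}$ (the paper says as much); commuting members of a St\"ackel system share a common eigenframe; and the closing step --- conjugating the adapted frame to the standard one by some $g\in\OG(n+1)$ and appealing to Observation~\ref{obs:groupaction} together with the uniqueness in Observation~\ref{obs:Benenti} --- is sound.

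The gap sits in the middle, and it is not a technicality: the two claims that carry the entire weight are precisely the content of the theorem you are asked to prove. That the integrability conditions on a constant curvature space force the coordinate web of a St\"ackel system to consist of intersections of $\S^n$ with a confocal family of quadrics, and that degenerate webs reduce, via orthogonal splittings $\R^{n+1}=\bigoplus V_\alpha$, to compositions of lower-dimensional spherical webs --- these two statements \emph{are} the Boyer--Kalnins--Winternitz/Kalnins--Miller classification, so invoking them as ``the classical confocal-quadrics argument'' turns the proposal into a road map through the known proof rather than a proof. There is also a flaw in how you draw the dichotomy: by the paper's definition \emph{every} St\"ackel system contains members with simple eigenvalues (e.g., the tensor $(e_0\wedge e_1)^2$ defining spherical coordinates on $\S^2$ has simple eigenvalues on a dense open set), so ``a member with simple eigenvalues exists'' cannot distinguish the elliptic case from the degenerate one; the distinction lives in the confocal family itself, and in the degenerate case the principal axes are not unique, so your ``fixed frame'' conclusion fails as stated. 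Finally, in the inductive step the factor $\S^{k-1}$ of a composition is \emph{not} a round subsphere of $\S^n$ lying in a coordinate subspace of $\R^{n+1}$, so ``applying the result on each factor'' requires the operadic description of how St\"ackel systems compose (as in \cite{Schoebel&Veselov}) to see that diagonality is preserved --- you flag this as ``the decisive point'' but do not argue it.
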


Note that $\mathcal K_0(S^n)$ is the space of diagonal symmetric forms on the
Lie algebra $\mathfrak{so}(n+1)$ with respect to the basis $\{e_i\wedge
e_j\}$.

\begin{definition}
	We call $\mathcal K_0(S^n)$ the subspace of Killing tensors on $\S^n$
	\emph{in normal form} and def\/ine
	\begin{gather*}
		\mathcal S_0(\S^n):=
		\mathcal S(\S^n)\cap G_n\bigl(\mathcal K_0(\S^n)\bigr)
	\end{gather*}
	to be the \emph{variety of separable coordinates in normal form}.
\end{definition}

It is not dif\/f\/icult to f\/igure out that the stabiliser of $\mathcal K_0(S^n)$
in $\OG(n+1)$ is the hyperoctahedral group $S_{n+1}\ltimes{\mathbb
Z_2}^{n+1}$, which acts by permutations and sign changes of the f\/ixed basis
$\{e_i\}$, and that the stabiliser action on $\mathcal K_0(S^n)$ descends to
the permutation group $S_{n+1}$. We therefore have
\begin{gather}
	\label{eq:slicing}
	\frac{\mathcal S(\S^n)}{\OG(n+1)}\isom\frac{\mathcal S_0(\S^n)}{S_{n+1}}.
\end{gather}
That is, we replace the quotient of a variety by a continuous group with a
quotient of a linear section of this variety by a f\/inite group.

\subsection{The isomorphism}

Recall our Main Problem, to describe the structure of the variety $\mathcal
S(M)$ and the isometry group action on it. In the preceding section we
reduced this problem for $M=\S^n$ to describing the variety $\mathcal
S_0(\S^n)$ and the $S_{n+1}$-action on it. The following theorem solves our
Main Problem at once, by identifying $\mathcal S_0(\S^n)$ to a
complicated, but well understood moduli space.

\medskip
\begin{flushright}
	``In great mathematics \\
	there is a very high degree of unexpectedness, \\
	combined with inevitability and economy.'' \\
	\rule{.48\textwidth}{0.5pt}\\
	\textsc{Godfrey Harold Hardy}
\end{flushright}
\medskip

\begin{theorem}[\cite{Schoebel&Veselov}]
	\label{thm:iso}
	The variety of St\"ackel systems on $\S^n$ in normal form is a smooth $(!)$
	projective variety, isomorphic to the real part of the Deligne--Mumford
	moduli space of stable curves of genus~$0$ with~$n+2$ marked points:
	\begin{gather}
		\label{eq:iso}
		\mathcal S_0(\S^n)\isom\bar{\mathscr M}_{0,n+2}(\R).
	\end{gather}
	This isomorphism is $S_{n+1}$-equivariant.
\end{theorem}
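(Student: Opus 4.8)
The plan is to realise both sides of \eqref{eq:iso} as the closure of a common open dense stratum and then to match their stratifications. First I would make the left-hand side completely explicit. By the theorem of Boyer--Kalnins--Winternitz every St\"ackel system on $\S^n$ can be conjugated into $\mathcal K_0(\S^n)$, the space of forms diagonal in the basis $\{e_i\wedge e_j\}$ of $S^2\mathfrak{so}(n+1)$, so a point of $\mathcal S_0(\S^n)$ is an $n$-plane of such diagonal Killing tensors subject to the conditions cutting out $\mathcal S(\S^n)$. I would restrict the Nijenhuis conditions \eqref{eq:Nijenhuis} to $\mathcal K_0(\S^n)$ and translate them into explicit homogeneous polynomial relations among the diagonal coefficients $\beta_{ij}$ (a finite set, by Hilbert's basis theorem; cf.\ Observation~\ref{obs:last}). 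The outcome I expect is that, after encoding the coefficients in a symmetric $(n+1)\times(n+1)$ matrix modulo the metric (the identity), the solutions are generically the classical elliptic (Jacobi) coordinate systems: those determined by a single Killing tensor with simple spectrum $u_0<\cdots<u_n$, the St\"ackel system being the span of its elementary-symmetric combinations, with the $u_i$ playing the role of foci.

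Second, I would identify the open stratum. Elliptic coordinates are unchanged by an affine reparametrisation $u_i\mapsto au_i+b$ of the foci (adding a multiple of the identity is the metric, rescaling is an overall scale), so the generic St\"ackel systems in normal form are parametrised by $n+1$ distinct reals modulo the affine group. Viewing $u_0,\dots,u_n$ as $n+1$ marked points on the affine line $\R\subset\P^1(\R)$ and the fixed point $\infty$ as one further marked point, and using that $\mathrm{Aff}(1)$ is exactly the stabiliser of $\infty$ in $\mathrm{PGL}_2(\R)$, this quotient is precisely the configuration space $\mathscr M_{0,n+2}(\R)$ of $n+2$ points on $\P^1(\R)$. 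The dimensions agree, both being $n-1$. Under this dictionary the residual stabiliser $S_{n+1}$ permutes the foci $u_i$, that is, the $n+1$ finite marked points while fixing $\infty$, which is the claimed $S_{n+1}$-equivariance on the open part.

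Third --- and this is where the real work lies --- I would extend the identification across the boundary and upgrade it from a set bijection to an isomorphism of varieties. The point is that $\mathcal S_0(\S^n)$ should be the closure of the elliptic stratum inside $G_n(\mathcal K_0(\S^n))$ while $\bar{\mathscr M}_{0,n+2}(\R)$ is the closure of $\mathscr M_{0,n+2}(\R)$, so I must show that these two closures correspond. Both carry a recursive structure: the degenerate separable systems in the Kalnins--Miller classification, obtained by letting foci collide or escape to $\infty$, are built by a gluing construction out of lower-dimensional separable systems, and this is exactly the operad of mosaics, i.e.\ the gluing of stable nodal curves that produces the boundary strata of $\bar{\mathscr M}_{0,n+2}(\R)$, stratified by the Stasheff associahedra. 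I would set up both operad structures and prove the isomorphism by induction on $n$, checking that a colliding-foci limit of the Killing-tensor data produces precisely the Killing tensors of the corresponding lower-dimensional factors and that the boundary divisors match the nodal strata. Once the two stratified spaces are shown to coincide with matching local models, smoothness of $\mathcal S_0(\S^n)$ --- the surprising ``$(!)$'' --- is inherited from the known smoothness of the Deligne--Mumford space, and $S_{n+1}$-equivariance extends from the open stratum by continuity.

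The main obstacle is this third step. Deriving the explicit polynomial form of the Nijenhuis conditions on $\mathcal K_0(\S^n)$ and recognising them is laborious but essentially mechanical; by contrast, proving that the Nijenhuis variety carries no spurious components, that its boundary matches the nodal stratification exactly, and that the correspondence is a morphism of varieties requires careful control of the degenerate limits. The operadic, inductive bookkeeping --- showing that the two gluing constructions are compatible --- is the crux, and it is precisely this that makes the unexpected smoothness provable.
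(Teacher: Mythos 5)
First, a caveat: this paper does not prove Theorem~\ref{thm:iso} at all --- it imports it from~\cite{Schoebel&Veselov} (indeed, that is why the theorem carries a citation), so your attempt has to be compared against the proof given there. That proof is purely algebraic and quite different from your plan: restricted to the normal-form space $\mathcal K_0(\S^n)$, the Nijenhuis and commutation conditions for diagonal Killing tensors reduce to bilinear relations among the coefficients $\beta_{ij}$, and these relations identify St\"ackel systems in normal form with Gaudin subalgebras of the Kohno--Drinfeld Lie algebra $\mathfrak t_{n+1}$; the theorem then follows from the result of Aguirre, Felder and Veselov that the variety of Gaudin subalgebras of $\mathfrak t_{n+1}$ is isomorphic to $\bar{\mathscr M}_{0,n+2}$, with smoothness and $S_{n+1}$-equivariance carried along by that identification. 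In other words, the known proof sidesteps all degeneration analysis by mapping the problem onto an already solved algebraic one. Your first two steps are nevertheless sound and consistent with this picture: the generic St\"ackel systems in normal form are indeed the elliptic ones, parametrised by $n+1$ distinct foci modulo the affine group, and this open stratum is $\mathscr M_{0,n+2}(\R)$ with the correct $S_{n+1}$-action (the generic Gaudin subalgebras are spanned by the Hamiltonians $\sum_{j\ne i}t_{ij}/(z_i-z_j)$, the $z_i$ being exactly your foci).

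The genuine gap is your third step, and it is not a small one. First, you must show that the variety cut out in $G_n\bigl(\mathcal K_0(\S^n)\bigr)$ by the Nijenhuis and commutativity conditions has no components other than the closure of the elliptic stratum; nothing in your setup delivers this, and it is precisely the kind of statement for which the ``laborious but mechanical'' computation of the equations does not suffice. Second, the operad structure on $\mathcal S_0(\S^{n-1})$ that you want to drive the induction is, in this paper, presented as a \emph{consequence} of Theorem~\ref{thm:iso} (``Theorem~\ref{thm:iso} tells us that there is such a structure''); to use it in a proof you would have to establish independently that the composition~\eqref{eq:composition} maps tuples of St\"ackel systems to St\"ackel systems and that \emph{every} boundary point of the closure arises from such a composition --- that is again the whole difficulty, not a reduction of it, and using the operad as given would be circular. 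Third, matching stratifications set-theoretically does not produce an isomorphism of varieties, and smoothness cannot be ``inherited'' before the identification is established as a morphism; you acknowledge this but propose no mechanism for it. Tellingly, the paper itself poses your route --- a direct construction of stable curves from separable coordinates realising~\eqref{eq:iso} --- as an open problem (Problem~1), precisely because the known proof does not provide it. So your proposal is best read as a plausible strategy for attacking that open problem, not as a proof of the theorem.
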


To give this theorem a meaning, one would have to say some words about the
moduli spa\-ces~$\bar{\mathscr M}_{0,n}$ and their real parts. We leave it at
saying that $\bar{\mathscr M}_{0,n+1}(\R)$ is a compactif\/ication of the
conf\/iguration space of~$n+1$ distinct points on~$\P^1$ modulo automorphisms
or, equivalently, of~$n$ points on the real line modulo the af\/f\/ine group.
Giving a formal def\/inition would go far beyond the scope of this notice and we
refer to the literature for more details on moduli spaces of curves
\cite{Deligne&Mumford,Harris&Morrison,Knudsen,Knudsen+,Mumford}. To get an idea how
these spaces look like, let us list the simplest examples.
\begin{itemize}\itemsep=0pt
	\item $\bar{\mathscr M}_{0,3}(\R)$ is a point.
	\item $\bar{\mathscr M}_{0,4}(\R)$ is a projective line.
	\item $\bar{\mathscr M}_{0,5}(\R)$ is the blowup of a projective plane in four points.
	\item In general, $\bar{\mathscr M}_{0,n+2}(\R)$ is an iterated blowup of $\P^{n-1}$ \cite{Davis&Januszkiewicz&Scott,Devadoss,Kapranov}.
\end{itemize}

The isomorphism in Theorem~\ref{thm:iso} is rather surprising, as it relates
two seemingly completely unrelated objects~-- separable coordinates for the
Hamilton--Jacobi equation on one hand and stable algebraic curves with marked
points on the other. We can exploit the isomorphism~\eqref{eq:iso} by
``pulling back'' known properties from $\bar{\mathscr M}_{0,n+2}(\R)$ to
$\mathcal S_0(\S^n)$ and interpret them in terms of separable coordinates on
spheres. In the rest of this section we will illustrate this by giving three
examples of properties which reveal hitherto unknown structures in the long
known classif\/ication of orthogonal separable coordinates.

\subsection{Stashef\/f polytopes}

A property which can readily be used to describe the quotient $\mathcal
S(\S^n)/\OG(n+1)$ is the fact that the moduli space $\bar{\mathscr
M}_{0,n+2}(\R)$ is tiled by $(n+1)!/2$ copies of the Stashef\/f polytope
$K_{n+1}$ \cite{Devadoss, Kapranov}. \emph{Stasheff polytopes}, or
\emph{associahedra}, were introduced by Stashef\/f as a combinatorial object in
the homotopy theory of $H$-spaces \cite{Stasheff,Stasheff+}\footnote{See also the historical note in Stashef\/f's contribution to~\cite{Tamari}.}.

Instead of a formal def\/inition, we list the simplest examples, which will be
suf\/f\/icient for our purpose of illustration:
\begin{itemize}\itemsep=0pt
	\item $K_2$ is a point;
	\item $K_3$ is a line segment;
	\item $K_4$ is a pentagon;
	\item in general, $K_{n+1}$ is an ($n-1$)-dimensional polytope.
\end{itemize}

Combining the homeomorphism \eqref{eq:slicing} with the equivariant isomorphism~\eqref{eq:iso}, we obtain
\begin{gather}
	\label{eq:quotient}
	\frac{\mathcal S(\S^n)}{\OG(n+1)}
	\isom\frac{\bar{\mathscr M}_{0,n+2}(\R)}{S_{n+1}}.
\end{gather}
The $S_{n+1}$-action on $\bar{\mathscr M}_{0,n+2}(\R)$ is transitive on the
$(n+1)!/2$ tiles and the stabiliser of a single tile is $\Z_2$, a ref\/lection
symmetry of the Stashef\/f polytope $K_{n+1}$. A fundamental domain for this
action is therefore ``one half'' of the Stashef\/f polytope~$K_{n+1}$ and the
quotient~\eqref{eq:quotient} is obtained by identifying certain faces of
$K_{n+1}/\Z_2$. In the next section we give the exact recipe which faces have
to be identif\/ied.

\subsection{Example}

On $\S^2$ there are two families of separable coordinates, as shown in
Fig.~\ref{fig:sphelliptic}: One is the familiar spherical coordinates,
given by longitude and latitude. The other is elliptic coordinates,
consisting of two families of confocal ellipses around two pairs of antipodal
focal points. Modulo isometries, they form a one-parameter family,
parametrised by the angular distance between the foci. Obviously, elliptic
coordinates degenerate to spherical coordinates if this angle goes to~$0$ or~$\pi$.

\begin{figure}[t]
\centering
	\includegraphics[width=.35\textwidth]{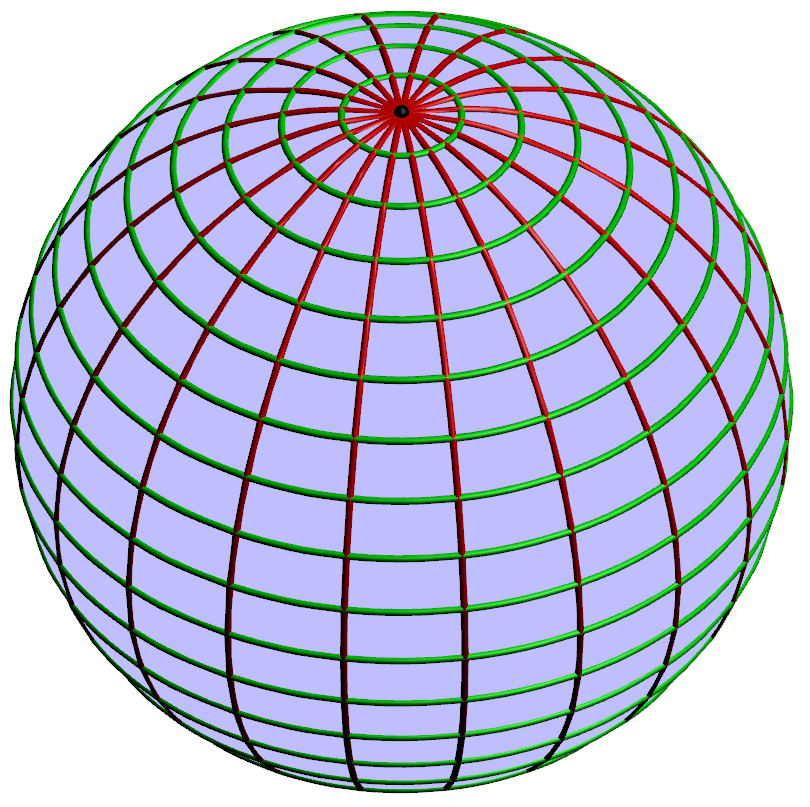}\qquad
	\includegraphics[width=.35\textwidth]{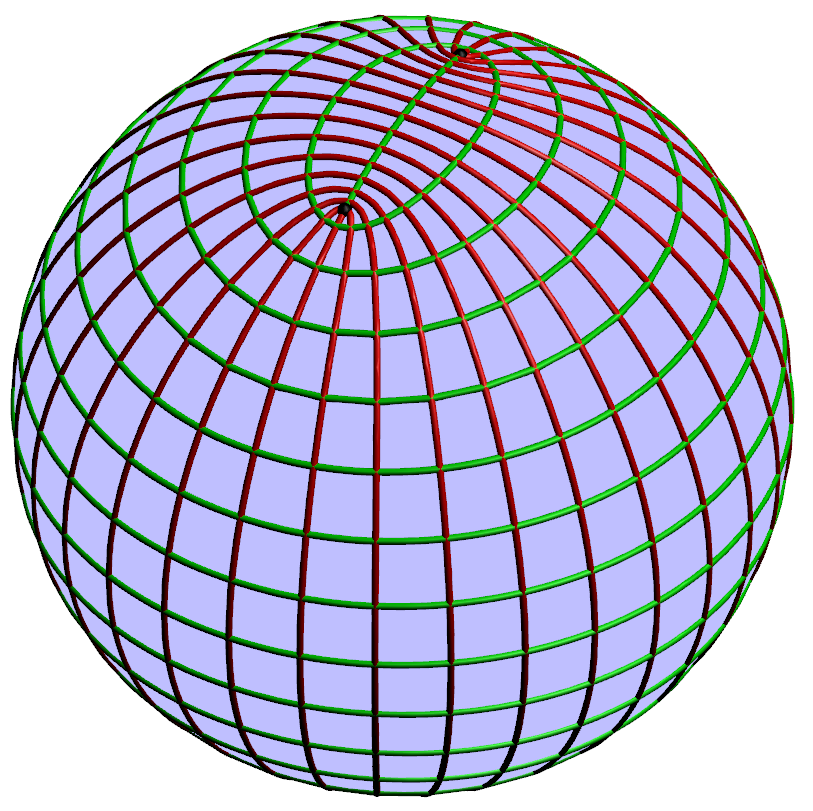}

	\caption{Spherical and elliptic coordinates on~$\S^2$.}	\label{fig:sphelliptic}
\end{figure}

Fig.~\ref{fig:S2} shows how these two families are assembled to give the
moduli space $\bar{\mathscr M}_{0,4}(\R)$ tiled by three copies of~$K_3$,
which is a circle tiled by three intervals. The interiors of the three
intervals correspond to elliptic coordinates whose focal points lie in one of
the three coordinate planes, while the three boundary points separating them
correspond to spherical coordinates with the poles on one of the three
coordinate axes. When we cross the boundary between two intervals, the two
pairs of foci f\/irst coalesce into one pair of poles, and then separate in
orthogonal direction. Note that this trajectory is not smooth, which is why
the smoothness of~$\mathcal S_0(\S^2)$ comes rather unexpected.

\begin{figure}[h]
	\centering
	\includegraphics[width=.6\textwidth]{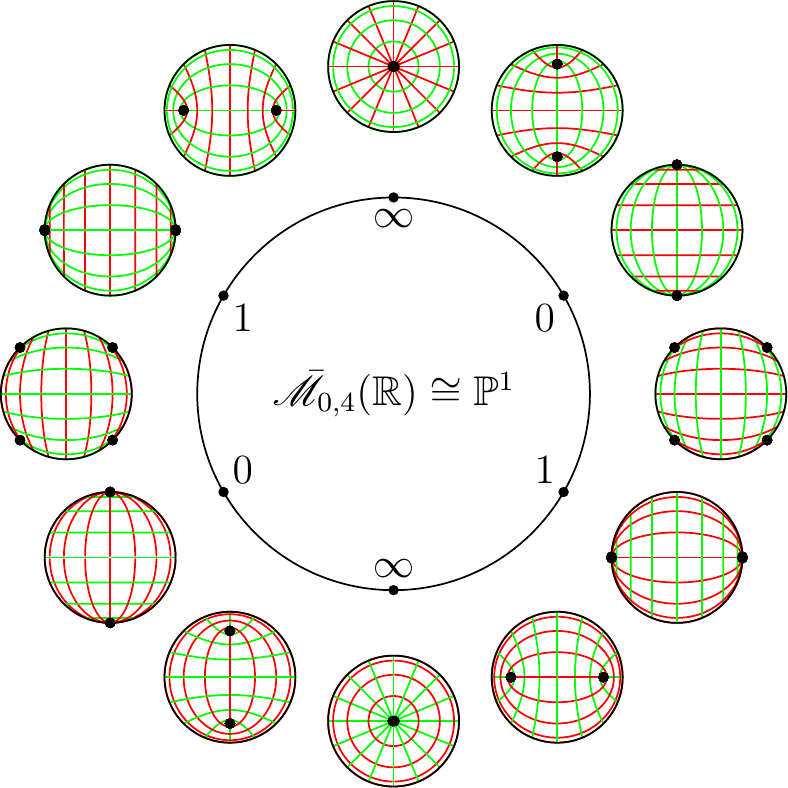}
	\caption{Parametrisation of separable coordinates in normal form on $\S^2$ by
		$\bar{\mathscr M}_{0,4}(\R)\isom\P^1$. To indicate the monodromy, a
		two-fold covering of the projective line is shown.}
	\label{fig:S2}
\end{figure}

Note also that there is a monodromy if we consider coordinates as ordered: If
we start with spherical coordinates and track the longitudes while going once
around the circle, we end up with latitudes and vice versa. This is
illustrated in Fig.~\ref{fig:S2} by showing a two-fold cover (which is again
a circle) and using a dif\/ferent colour for each coordinate.

The quotient $\mathcal S_0(\S^2)/S_3$ is an interval of which one endpoint
corresponds to spherical coordinates, while the other one corresponds to
elliptic coordinates where the four focal points are equidistantly spaced. In
particular, these points are non-isometric and hence not identif\/ied in~$K_3/\Z_2$.

Dimension $n=2$ is, however, trivial from our point of view, for in this case
the Nijenhuis integrability conditions~\eqref{eq:Nijenhuis} are void, owing to
the antisymmetrisation over three indices. For $n=3$ the tiling of
$\bar{\mathscr M}_{0,5}(\R)$ by twelve copies of $K_4$ can be seen as follows.
Four general points in the projective plane determine six projective lines.
These lines divide the plane into twelve triangles, each having two of the
four points as vertices. A blowup in these four points therefore transforms
the triangles into pentagons, which form the twelve tiles of~$\bar{\mathscr
M}_{0,5}(\R)$.

\subsection{Rooted planar trees}

While points in $\mathcal S_0(\S^n)$ parametrise single isometry classes of
separable coordinates, the (open) faces of a Stashef\/f polytope provide a more
coarse classif\/ication into families. Stashef\/f polytopes have a number of
remarkable combinatorial properties \cite{Devadoss,Devadoss&Read}, which can
readily be used to label and count these families of separable coordinates.
\begin{itemize}\itemsep=0pt
	\item
		The faces of the Stashef\/f polytope $K_n$ can be labelled by rooted
		planar trees with $n$ leaves.
	\item
		The codimension $m$ faces correspond to trees with $m$ inner nodes
		other than the root. In particular, the number of vertices of $K_n$
		is $C_{n-1}$, where $C_n=\frac1{n+1}\binom{2n}n$ is the \emph{Catalan
		number}.
\end{itemize}

We can therefore label the dif\/ferent families of orthogonal separable
coordinates on spheres by rooted planar trees. For instance:

\begin{itemize}\itemsep=0pt
	\item
		Elliptic coordinates on $\S^n$, introduced by C.~Neumann
		\cite{Neumann}, form a family with $n-1$ continuous parameters,
		labelled by a corolla tree with $n+1$ leaves, as shown in
		Fig.~\ref{fig:trees} for $n+1=5$. They label the interior of the
		Stashef\/f polytope.
\begin{figure}[h]
			\centering
\begin{minipage}{45mm}\centering
			\includegraphics[width=40mm]{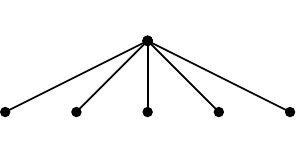}\\
(a) corolla tree.
\end{minipage}\qquad
\begin{minipage}{45mm}\centering
\includegraphics[width=40mm]{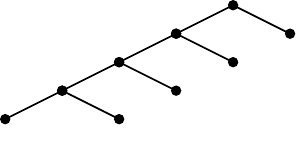} \\ (b) left comb tree.
\end{minipage}
			\caption{}	\label{fig:trees}
		\end{figure}
	\item
		Polyspherical coordinates, introduced by Vilenkin \cite{Vilenkin},
		form a discrete family labelled by binary trees
		\cite{Vilenkin&Klimyk}. They correspond to the vertices of the
		Stashef\/f polytope.
	\item
		In particular, standard spherical coordinates correspond to a left
		comb tree, as shown in Fig.~\ref{fig:trees}.
\end{itemize}

Two trees which dif\/fer by reversing the order of a node's children or by a
sequence of such opera\-tions are called \emph{dyslectic}. Separation
coordinates are isometric if and only if the corresponding trees are
dyslectic. This gives the recipe how to obtain the quotient
\eqref{eq:quotient} from $K_{n+1}/\Z_2$: One has to identify those faces
labelled by dyslectic trees.

For example, Fig.~\ref{fig:K4} shows the Stashef\/f polytope $K_4$ with its
faces labelled by trees and the corresponding families of separable
coordinates. This matches the classical results obtained by
Eisenhart~\cite{Eisenhart} and Olevski\u\i~\cite{Olevskii}. The ref\/lection
symmetry divides $K_4$ into two halves, each a~quadrilateral. Two adjacent
vertices of this quadrilateral are labelled by dyslectic trees and hence have
to be identif\/ied. One of them is a comb tree, so these are isometric to
standard spherical coordinates.

\begin{figure}[h]
	\centering
	\includegraphics[height=0.44\textwidth]{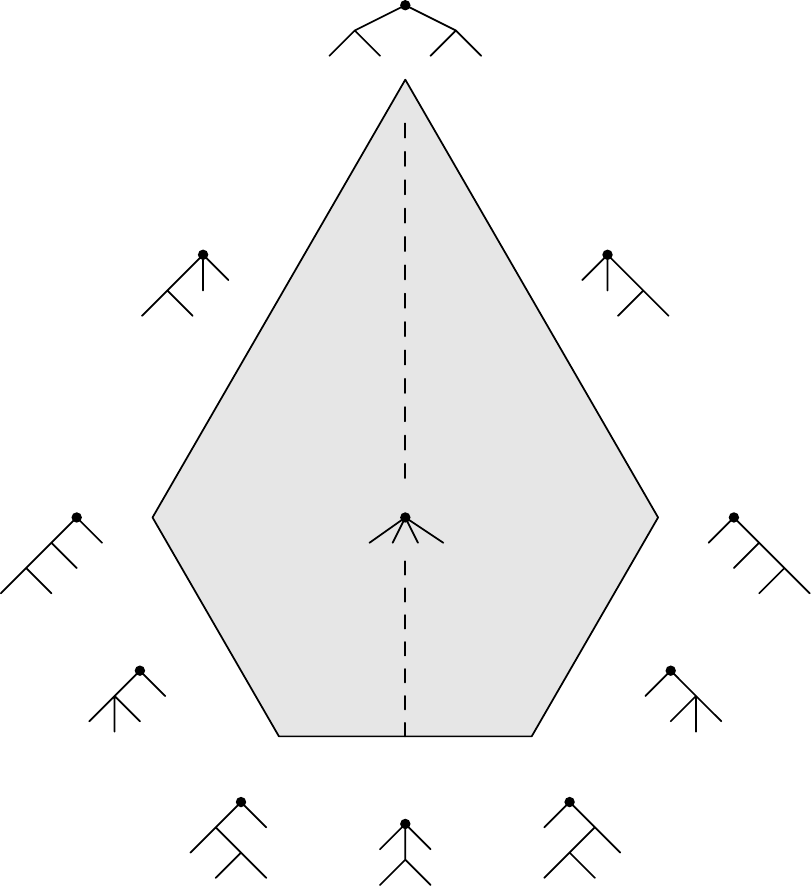}\qquad {\includegraphics[height=.44\textwidth]{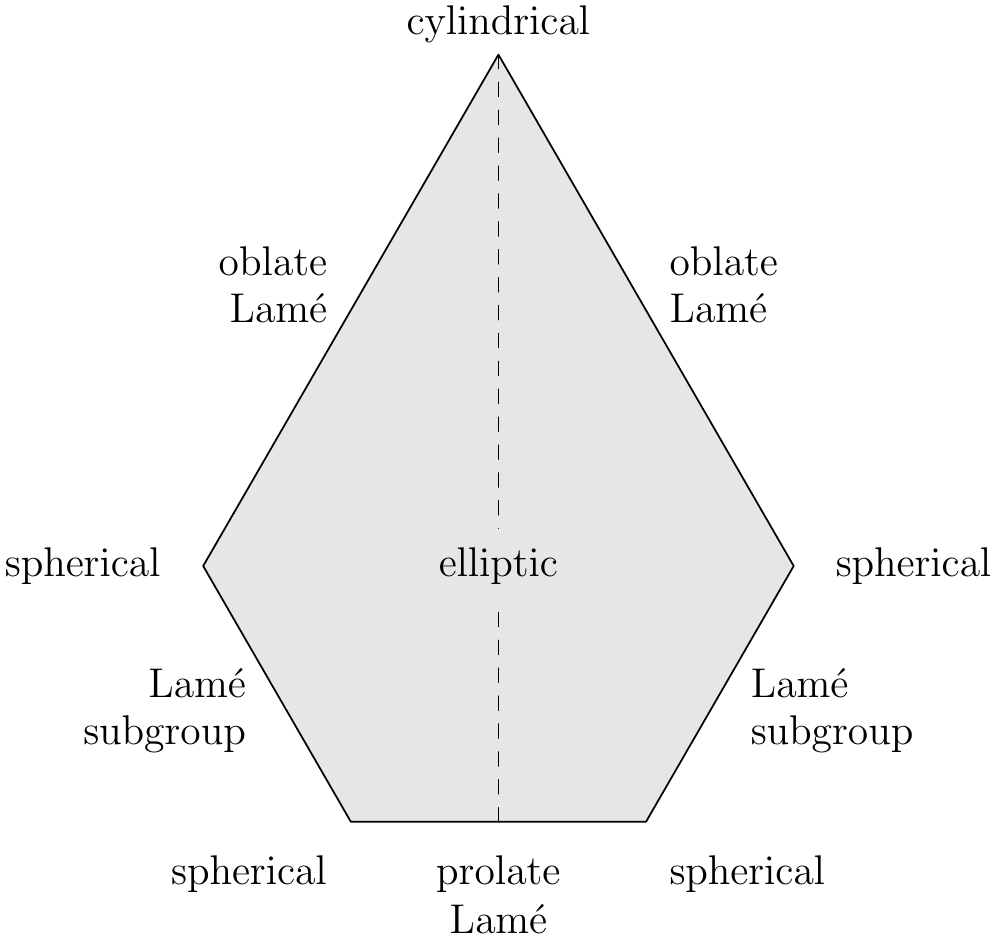}}
	\caption{The faces of the Stashef\/f polytope $K_4$, labelled by rooted trees
		(left) and separable coordinates on~$\S^3$ (right).}
	\label{fig:K4}
\end{figure}

In this way we can not only explain intrinsically the families in~\cite{Kalnins&Miller} and why they can be labelled by trees. The Stashef\/f
polytope also retains the information on how the individual families
degenerate into each other. It is, for instance, obvious in our setting that
all orthogonal separable coordinates are limits of elliptic coordinates.

\subsection{Operad structure}

A remarkable fact about the moduli spaces $\bar{\mathscr M}_{0,n}(\R)$, which
will allow us to give a simple way how to explicitly construct separable
coordinates, is that the sequence $\mathcal O(n):=\bar{\mathscr
M}_{0,n+1}(\R)$ carries a~natural operad structure, called the \emph{mosaic
operad}~\cite{Devadoss}.

An operad is best explained with composition of functions. Given a function
$f$ depending on~$k$ arguments and $k$ functions $f_1,\ldots,f_k$, each
depending on an arbitrary (f\/inite) number~$n_i$ of arguments, we can compose
them to give a function $f\circ(f_1,\ldots,f_k)$ def\/ined by
\begin{gather*}
	\bigl(f\circ(f_1,\ldots,f_k)\bigr)(\boldsymbol x_1,\ldots,\boldsymbol x_k)
	:=
	f\bigl(f_1(\boldsymbol x_1),\ldots,f_k(\boldsymbol x_k)\bigr)
\end{gather*}
and depending on $n_1+\cdots+n_k$ arguments. An \emph{operad} is an algebraic
object formalising the properties of this composition: left/right identity,
associativity and equivariance.
\begin{definition}
	An \emph{operad} is a sequence of sets $\mathcal O(n)$ indexed by
	$n=1,2,3,\ldots$ together with a~composition map
	\begin{gather*}
		\begin{array}{@{}rrcl}
			\circ\colon &\mathcal O(k)\times\mathcal O(n_1)\times\cdots\times\mathcal O(n_k)&\longrightarrow&\mathcal O(n_1+\cdots+n_k),\\
			&(y,x_1,\ldots,x_k)&\mapsto&y\circ(x_1,\ldots,x_k)
		\end{array}
	\end{gather*}
	and a right action
	\begin{gather*}
		\begin{array}{@{}rrcl}
			\star\colon &\mathcal O(n)\times S_n&\longrightarrow&\mathcal O(n),\\
			&(x,\pi)&\mapsto&x\star\pi
		\end{array}
	\end{gather*}
	of the permutation group $S_n$, satisfying the following properties:
	\begin{description}\itemsep=0pt
		\item[Identity:]
			There is a distinguished element $1\in\mathcal O(1)$ with
			\begin{gather*}
				y\circ(1,\ldots,1)=y=1\circ y.
			\end{gather*}
		\item[Associativity:]
			\begin{gather*}
				z\circ(y_1\circ x_1,\ldots,y_k\circ x_k)=\bigl(z\circ(y_1,\ldots,y_k)\bigr)\circ(x_1,\ldots,x_k).
			\end{gather*}
		\item[Equivariance:]
			\begin{gather*}
				(y\star\pi)\circ\bigl((x_1,\ldots,x_k)\star\pi\bigr) =\bigl(y\circ(x_1,\ldots,x_k)\bigr)\star\pi,\\
				y\circ(x_1\star\pi_1,\ldots,x_k\star\pi_k) =\bigl(y\circ(x_1,\ldots,x_k)\bigr)\star(\pi_1,\ldots,\pi_k),
			\end{gather*}
			where $S_k$ acts on $(x_1,\ldots,x_k)$ by permutation and
			$(\pi_1,\ldots,\pi_k)$ on $\mathcal O(n_1+\cdots+n_k)$ under the
			inclusion $S_{n_1}\times\cdots\times S_{n_k}\hookrightarrow
			S_{n_1+\cdots+n_k}$.
	\end{description}
\end{definition}

We do not need here the explicit def\/inition of the mosaic operad on the moduli
spaces $\bar{\mathscr M}_{0,n+1}(\R)$. Theorem~\ref{thm:iso} tells us that
there is such a structure on $\mathcal O(n):=\mathcal S_0(\S^{n-1})$ as
well and once knowing it is there it is not too dif\/f\/icult to f\/igure it out in
terms of separable coordinates. Indeed, the sequence of spheres $\mathcal
O(n)=S^{n-1}$ carries a simple operad structure, given by the composition
\begin{gather}
	\label{eq:composition}
	\begin{array}{rcl}
		S^{k-1}\times S^{n_1-1}\times\cdots\times S^{n_k-1}&\longrightarrow&S^{n_1+\cdots+n_k-1},\\
		(\boldsymbol y,\boldsymbol x_1,\ldots,\boldsymbol x_k)&\mapsto&
		\boldsymbol y\circ(\boldsymbol x_1,\ldots,\boldsymbol x_k)
		:=
		(y_1\boldsymbol x_1,\ldots,y_k\boldsymbol x_k).
	\end{array}
\end{gather}
By pullback, this def\/ines an operad on (local) coordinate systems. It turns
out that this operad restricts to orthogonal separable coordinate systems in
normal form, which gives the translation of the mosaic operad from
$\bar{\mathscr M}_{0,n+1}(\R)$ to $\mathcal S_0(\S^{n-1})$. Moreover, the
mosaic operad can also be expressed in a simple way in terms of St\"ackel
systems \cite{Schoebel&Veselov}.

The mosaic operad induces an operad on rooted planar trees, whose composition
$T\circ(T_1$, $\ldots,T_k)$ is given by grafting the $k$ trees $T_1, \ldots,T_k$
with their respective roots to the~$k$ leaves of the tree~$T$. Now observe
that by iterating this composition any rooted planar tree can be constructed
from corolla trees such as the one shown in Fig.~\ref{fig:trees}, which
correspond to elliptic coordinates. Consequently, on spheres all orthogonal
separation coordinates in a certain dimension can be constructed by composing
elliptic coordinates on lower-dimensional spheres under the operad composition~\eqref{eq:composition}.

Classically, non-generic separable coordinates have been studied via limits of
elliptic coordinates, see for example~\cite{Kalnins&Miller&Reid}. This
requires an intricate analysis, since these limits strongly depend on the path
chosen to approach the limit. In our context this is obvious, because the
moduli spaces $\bar{\mathscr M}_{0,n}(\R)$ are iterated blowups. Our result
replaces the complicated limiting procedure for elliptic coordinates in a
f\/ixed dimension by a simple operad composition of elliptic coordinates from
lower dimensions. Likewise, it gives a powerful recursive method for proving
statements about separable coordinates and related objects like St\"ackel
systems: First prove that the statement holds for generic coordinates and then
show that it is stable under the operad composition.

\section{Open problems}

From our algebraic geometric point of view on the classif\/ication of separable
coordinates in conjunction with the example in the preceding section arise a~number of natural questions, which we would like to address here.

\medskip
\begin{flushright}
	\small
	``The art of doing mathematics \\
	consists in f\/inding that special case \\
	which contains all the germs of generality.'' \\
	\rule{.42\textwidth}{0.5pt}\\
	\textsc{David Hilbert}
\end{flushright}
\medskip

The proof of Theorem~\ref{thm:iso} is unsatisfactory in the sense that it does
not give an explicit construction of stable algebraic curves with marked
points from separable coordinates on spheres, or vice versa. Such a~construction would be desirable in view of generalisations of this result (see
below).

\begin{problem}
	Give a direct construction of $($equivalence classes of$)$ stable algebraic
	curves with marked points from $($equivalence classes of$)$ orthogonal
	separable coordinates on spheres $($or vice versa$)$, realising the
	isomorphism~\eqref{eq:iso}.
\end{problem}

Finding separable coordinates is only the f\/irst step in solving the partial
dif\/ferential equation under consideration. The second is to actually perform
the separation of variables. This reduces the partial dif\/ferential equation
in $n$ variables to $n$ ordinary dif\/ferential equations. The third step is to
solve these ordinary dif\/ferential equations individually (and the fourth to
put them together to an explicit solution of the initial equation). The
operad structure we found for separable coordinates should therefore be
somehow ref\/lected on the corresponding special functions as well.

\begin{problem}
	Find an explicit operad structure on those special functions that arise
	from a~separation of variables on spheres.
\end{problem}

Separable coordinates on Euclidean space can be obtained by suitable
Inonu-Wigner contractions of separable coordinates on the sphere~\cite{IPSW}.
In the spirit of our approach, this construction should have an algebraic
geometric counterpart, a~``contraction of varieties'' from $\mathcal S(\S^n)$
to $\mathcal S(\mathbb E^n)$.

\begin{problem}
	Give an algebraic geometric interpretation of In\"on\"u--Wigner contractions of
	St\"ackel systems on spheres.
\end{problem}

A posteriori, one can f\/ind some clues for the mosaic operad on separable
coordinates on spheres in the work of Kalnins and Miller, namely the
composition formulas (2.29), (2.45), (2.55) and (3.14) in
\cite{Kalnins&Miller} or (3.45), (3.61) and (3.71) in \cite{Kalnins}. For
f\/lat and hyperbolic space similar, but more complicated composition formulas
are given there as well, cf.\ formulas (5.17), (4.25) in
\cite{Kalnins&Miller} as well as (5.39), (5.42), (5.47), (5.52), (5.54),
(5.60) and (5.62) in~\cite{Kalnins}. They involve separable coordinates on
lower-dimensional constant curvature spaces which are not necessarily of the
same curvature. This indicates that there might be an operad structure, or
rather a generalisation thereof, for orthogonal separable coordinates on
arbitrary constant curvature spaces.

\begin{problem}
	Is there a $($generalised$)$ operad on separable coordinates on constant
	curvature manifolds?
\end{problem}

Here we have only considered classical separation of variables, but there
exist several f\/lavours of it. In the complex setting one works on a complex
instead of a real Riemannian manifold~\cite{Kalnins&Miller&Reid}. In the
conformal setting Killing tensors are replaced by conformal Killing tensors~\cite{Kalnins&Miller83}. Our approach adapts to both of these settings,
yielding complex and conformal analogues of the variety~$\mathcal S(M)$. For
constant curvature spaces, they all contain~$\mathcal S(\S^n)$ as a subvariety
and therefore extend the moduli spaces $\bar{\mathscr M}_{0,n+2}(\R)$.

\begin{problem}
	Are the varieties $\mathcal S(M)$ for constant curvature manifolds~$M$ and
	their complex or conformal analogues related to moduli spaces of
	algebro-geometric objects generalising stable algebraic curves of genus
	zero with marked points?
\end{problem}

Another f\/lavour is non-orthogonal separation of variables, for which it is not
even clear whether our approach extends to this case as well or not.

\begin{problem}
	Does the set of separable coordinates $($orthogonal or not$)$ carry a natural
	structure of an algebraic variety?
\end{problem}

Our Main Theorem holds for any (pseudo-)Riemannian manifold, but all
interesting examples we are aware of are for constant curvature manifolds.
Trivial examples are manifolds with no or just a single St\"ackel system,
manifolds of dimension two, for which the Nijenhuis conditions~\eqref{eq:Nijenhuis} are void so that $\mathcal S(M)\isom\P^{d-2}$ with
$d=\dim\mathcal K(M)$, or products $M_1\times M_2$, for which the inclusion
$\mathcal S(M_1\times M_2)\subseteq\mathcal S(M_1)\times\mathcal S(M_2)$ is an
equality. Possible candidates for non-trivial examples include the Thurston
geometries in dimension three and homogeneous spaces, as each of these classes
comprises spheres.

\begin{problem}
	Find a non-constant curvature manifold $M$ for which the variety~$\mathcal
	S(M)$ has a~non-trivial geometry.
\end{problem}

The key observation which implied that variable separation is an algebraic
geometric problem was that we seek for solutions $K$ of the Nijenhuis
integrability conditions within a f\/inite-dimensional vector space and that
these equations are algebraic in $K$ and its derivatives. This means that our
proposed approach is not bound to separable systems, but applies to any
problem whose determining equations can be cast into this form. Another
example of such a problem which is ``covertly algebraic geometric'' has
recently been found by Jonathan Kress and the author~\cite{Kress&Schoebel}, by
showing that the set of superintegrable systems in the plane likewise carries
the structure of a projective variety equipped with an isometry group action.
Here, too, the study of the geometry of this variety reveals hitherto unknown
structure in the known classif\/ication. We prove, for example, that to every superintegrable system in the plane
there is a~canonically associated planar arrangement of line triples. So
for superintegrable systems hyperplane arrangements seem to play the same
role as stable algebraic curves with marked points for separable systems.

\begin{problem}
	Are there further $($classification$)$ problems which are ``covertly algebraic
	geometric''? By this we mean that the determining equations are partial
	differential equations which are polynomial in the variables and their
	derivatives and for which one seeks all solutions within a~finite-dimensional vector space.
\end{problem}

\subsection*{Acknowledgements}

This notice is based on a talk held at the workshop ``Analytical Mechanics and
Dif\/ferential Geometry'' at the Universit\`a di Torino on 12th and 13th March
2015 on the occasion of Sergio Benenti's 70th birthday. The author would like
to thank the organisers, Claudia Chanu and Giovanni Rastelli, for their kind
invitation and hospitality, as well as Willard Miller for valuable discussions
on the subject.

\pdfbookmark[1]{References}{ref}
\LastPageEnding

\end{document}